\documentclass[11pt]{amsart}
\usepackage{amsfonts,amsmath,amssymb,fullpage,graphicx, comment}
\usepackage{verbatim}
\usepackage{xcolor}
\newtheorem{thm}{Theorem}[section]
\newtheorem{cor}[thm]{Corollary}
\newtheorem{lem}[thm]{Lemma}

\theoremstyle{definition}
\newtheorem{defn}[thm]{Definition}
\theoremstyle{remark}
\newtheorem{rem}[thm]{\bf{Remark}}
\numberwithin{equation}{section}

\newcommand{\beas}{\begin{eqnarray*}}
	\newcommand{\eeas}{\end{eqnarray*}}
\newcommand{\bes} {\begin{equation*}}
	\newcommand{\ees} {\end{equation*}}
\newcommand{\be} {\begin{equation}}
	\newcommand{\ee} {\end{equation}}
\newcommand{\bea} {\begin{eqnarray}}
	\newcommand{\eea} {\end{eqnarray}}
\newcommand{\ra} {\rightarrow}
\newcommand{\txt} {\textmd}

\newcommand{\R}{\mathbb R}

\newcommand{\C}{\mathbb C}

\newcommand{\N}{\mathbb N}

\begin{document}
	
	\title[Uniqueness results for quasi-analytic functions on compact Lie groups] {Uniqueness results for quasi-analytic functions on compact Lie groups and homogeneous spaces}
	
	\author{Mithun Bhowmik and Sanjib Pradhan}
	
	\address{(Mithun Bhowmik) Department of Mathematics, Indian Institute of Technology Kharagpur-721302, India}
	\email{mithun@maths.iitkgp.ac.in}
	
	\address{(Sanjib Pradhan) Department of Mathematics, Indian Institute of Technology Kharagpur-721302, India}
	\email{sanjibpradhan01.24@kgpian.iitkgp.ac.in}

	\thanks{The second author was supported by Junior Reseach Fellowship from IIT Kharagpur,  India}
	
	
\begin{abstract}
In this article, we establish a quantitative uniqueness theorem for quasi-analytic functions defined on compact, connected Lie groups $G$ and on homogeneous spaces $G/H$,  where $H$ is any closed subgroup of $G$.  Our result extends classical  Logvinenko-Sereda-type theorems to the setting of quasi-analytic functions on compact Lie groups and their homogeneous spaces.
		
		We introduce the quasi-analytic class of functions using iterates of the Casimir operator on $G$. This construction is justified by establishing that every function in this class possesses the strong unique continuation property.  In particular,  our result extends a result of P. Chernoff (Bull. Amer. Math. Soc., 1975) to the framework of compact Lie groups and their homogeneous spaces.
	\end{abstract}
	
	\subjclass[2010]{Primary 43A85; Secondary 22E30,  26E10}
	
	\keywords{Compact Lie groups,  Homogeneous spaces,  Quasi-analyticity,  Uniqueness theorem}
	
	\maketitle
	\section{Introduction}
	
	In this paper, we address the quantitative uniqueness problem for quasi-analytic functions defined on compact, connected Lie groups $G$ and their homogeneous spaces $G/H$.  Specifically,  we provide a uniform control on the $L^p$-norm of quasi-analytic functions on the whole space by the $L^p$-norm of their restriction on a relatively dense set.  The quasi-analytic class is defined in terms of iterates of the Casimir operator $\mathcal L$ on $G$.  We show that any function in this class satisfies the strong unique continuation property.   
	
\subsection{Quasi-analytic class on $G$}
We begin with the definition of quasi-analytic class of functions on $\R^d$.
\begin{defn}
A class $S$ of $C^\infty$ functions on a domain $\Omega \subseteq \R^d$ is said to be a quasi-analytic class if whenever a function $f$ in $S$,  and all its partial derivatives vanish at a point in $\Omega$,  then $f \equiv 0$ on $\Omega$. 
\end{defn} 
	For a real analytic function $f$ defined on an interval $(a, b)\subset \R$,  it follows from the formula of the remainder term associated with the Taylor series that the derivatives of $f$ cannot grow too fast.  This motivates one to find appropriate growth conditions on $C^\infty$ functions to define a class which accommodates functions which are not necessarily analytic,  yet a nonzero member cannot vanish at a point along with all its derivatives.  To define the class of quasi-analytic functions on an interval $I\subseteq \R$,  we introduce the following notations. A sequence $\mathcal M = \{M_n\}_{n=0}^\infty$ of positive numbers is said to be log-convex if
	\bes
	M_n^2 \leq M_{n-1} M_{n+1}, \:\: \textit{  for all } n\in \N.
	\ees
	We consider the following class of smooth functions defined on $I$ associated with $\mathcal M$,
	\be \label{defn-C}
	\widetilde C_{\mathcal M}(I)=\left\{f\in C^\infty(I): \|f^{(n)}\|_{L^\infty (I)} \leq B_f \beta_f^n M_n, \:\: \forall n \in \N_0 \right\}.
	\ee
	Here,  $f^{(n)}$ denotes the $n$-th derivative of $f$ and $\N_0=\N \cup \{0\}$ is the set of non-negative integers.  The positive constants $B_f, \beta_f$ depend on $f$ but not on $n$.   A necessary and sufficient condition on the log-convex sequence $\mathcal M$ to generate a quasi-analytic class is given by the following seminal result of Denjoy and Carleman.  
	\begin{thm} \cite[Theorem 19.11]{Ru}  \label{thm-DC}
		The class $\widetilde C_{\mathcal M} (I)$ is quasi-analytic if and only if
		\be \label{quasi-cond}
		\sum_{n\in \N} \frac{M_{n-1}}{M_n} =\infty, \:\: \textit{ or equivalently, } \:\: \sum_{n\in \N} M_n^{-\frac{1}{n}}=\infty.
		\ee
	\end{thm}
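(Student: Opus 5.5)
The plan is to prove the two directions of the Denjoy--Carleman theorem separately. First I will show that the two conditions in \eqref{quasi-cond} are equivalent. Then I will prove sufficiency through a Fourier/Laplace-transform argument combined with the associated function $Q(r)=\sup_n r^n/M_n$. Finally, I will prove necessity by constructing a compactly supported function from an infinite convolution product.

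\emph{Equivalence of the two conditions.} Log-convexity means the ratios $\lambda_n=M_n/M_{n-1}$ are nondecreasing (normalize $M_0=1$). Hence $M_n=\lambda_1\cdots\lambda_n\le \lambda_n^n$, so $M_n^{-1/n}\ge 1/\lambda_n$, and divergence of $\sum 1/\lambda_n$ forces divergence of $\sum M_n^{-1/n}$. For the converse I will use Carleman's inequality $\sum (a_1\cdots a_n)^{1/n}\le e\sum a_n$ with $a_n=1/\lambda_n$. It gives $\sum M_n^{-1/n}\le e\sum M_{n-1}/M_n$, so the two series diverge together.

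\emph{Sufficiency.} Assume the series diverges, and let $f\in\widetilde C_{\mathcal M}(I)$ vanish with all its derivatives at $x_0\in I$. By translating and rescaling ($\beta_f$ is absorbed into $M_n$ without changing divergence), I may assume $x_0=0$, $\beta_f=1$, and that I contains $[0,1]$. It suffices to show $f\equiv0$ on $[0,1]$; the same argument then propagates across all of $I$, since the set where all derivatives vanish is closed and, by this step, also open.
\begin{enumerate}
\item Using Taylor's formula with integral remainder (all derivatives vanish at $0$), I get $|f^{(k)}(x)|\le B x^{n}M_{n+k}/n!$.
\item I consider $F(z)=\int_0^1 f(t)e^{-zt}\,dt$ for $\operatorname{Re} z>0$. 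Repeated integration by parts uses the vanishing at $0$ and bounded derivatives at $1$. This yields $|F(z)|\le C\,(|z|^{-n}M_n + e^{-\operatorname{Re} z}\cdot\text{boundary terms})$. Arranging the boundary terms at $1$ more carefully (extending $f$ by zero, or working on $[0,1]$ with a cutoff), I aim for the bound $|F(z)|\le C/Q(|z|)$ in a half-plane.
\item I map the half-plane to the disc. The divergence of $\sum M_n^{-1/n}$ is equivalent, via the standard lemma (Ostrowski), to $\int^\infty \log Q(r)\,r^{-2}\,dr=\infty$. Then $\log|F|$ fails the Jensen/Poisson integrability condition for a nonzero bounded holomorphic function, so $F\equiv0$. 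Injectivity of the Laplace transform then gives $f\equiv0$.
\end{enumerate}
The main obstacle is step~(2) together with the Ostrowski lemma: obtaining a clean decay $1/Q(|z|)$ on a full half-plane. Following Rudin's proof, the way around it is to apply the argument to $g(x)=f(x)$ on $[0,\infty)$ after a cutoff, or directly to the holomorphic function built from Taylor data.

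\emph{Necessity.} Assume $\sum 1/\lambda_n<\infty$. I will define $\phi$ through its Fourier transform $\hat\phi(\xi)=\prod_n \frac{\sin(\xi/\lambda_n)}{\xi/\lambda_n}$, which is the infinite convolution of normalized indicators of $[-1/\lambda_n,1/\lambda_n]$. It is supported in $[-a,a]$ with $a=\sum 1/\lambda_n<\infty$, and it is nonzero. Using $|\xi|^N|\hat\phi(\xi)|\le \lambda_1\cdots\lambda_N\prod_{n>N}|\cdots|$ together with one extra factor for integrability, I obtain $\|\phi^{(N)}\|_\infty\le C M_{N+1}$. A shift of index is harmless: either use $M_{n+1}\le \beta^{n}M_n$ after replacing the sequence, or build the product from $\lambda_{n+1}$. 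A translate of $\phi$ then vanishes to infinite order at a point of $I$ without vanishing identically, so $\widetilde C_{\mathcal M}(I)$ is not quasi-analytic.
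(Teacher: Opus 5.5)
There is no proof of this statement in the paper to compare against: it simply cites Rudin's Theorem 19.11, where the class consists of functions on all of $\R$. Your equivalence of the two series (monotone ratios plus Carleman's inequality) is correct, and your necessity construction is essentially Rudin's.

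\textbf{The gap: sufficiency on a bounded interval.} The problem is created by your reduction to $[0,1]$. For $F(z)=\int_0^1 f(t)e^{-zt}\,dt$ with $f^{(k)}(0)=0$, integration by parts gives exactly
\[
z^nF(z)=\int_0^1 f^{(n)}(t)e^{-zt}\,dt-e^{-z}\sum_{k=0}^{n-1}z^{n-1-k}f^{(k)}(1).
\]
Since $|e^{-z}|$ is constant on vertical lines, the case $n=1$ (with Riemann--Lebesgue) already gives $|F(c+iy)|\sim |f(1)|e^{-c}/|y|$ when $f(1)\neq 0$. So $\int\log|F(c+iy)|\,dy/(1+y^2)$ is finite, no bound $C/Q(|z|)$ can hold on a half-plane, and there is nothing to contradict.

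Neither repair you suggest works. Extending $f$ by zero past $1$ produces the same jump. A cutoff $\chi$ equal to $1$ near $0$ and $0$ near $1$ destroys the $\mathcal M$-bounds: by Leibniz, $(\chi f)^{(n)}$ contains $\chi^{(n)}f$ on the transition region. A nonzero compactly supported $\chi$ can never have derivatives controlled by a quasi-analytic sequence, so this route presupposes the conclusion. The ``holomorphic function built from Taylor data'' is vacuous, since all Taylor coefficients at $0$ vanish.

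Rudin uses no cutoff. His $f$ lives on $\R$ with global bounds, so $g=f\chi_{[0,\infty)}$ is smooth with the same bounds, and $w^nF(w)=\int_0^\infty g^{(n)}(t)e^{-wt}\,dt$ has no boundary terms. This gives $|F(w)|\le B_f/Q(|w|/\beta_f)$ on $\mathrm{Re}\,w\ge 1$.

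\textbf{What your argument does and does not prove.} If you replace the reduction to $[0,1]$ by working directly on $[x_0,\infty)$, your sufficiency argument proves the theorem whenever $I$ contains a half-line. That covers Rudin's setting and everything the paper actually uses: in the proofs of Theorems \ref{thm-cher-G} and \ref{thm-cher-GH}, the functions $F_X$ and $\mathcal F_X$ are defined on all of $\R$ with bounds on $\R$. It does not cover a bounded interval, which the statement as written includes. For bounded $I$ you need a genuinely local mechanism. One option is Bang's theorem that a nonzero $h\in C_{\mathcal N}[0,1]$ has at most $n_h<\infty$ zeros counted with multiplicity (the machinery behind Theorem \ref{thm-NSV}), so an infinite-order zero forces $h\equiv 0$. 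Another is a real-variable Taylor-iteration argument as in H\"ormander's treatment.

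\textbf{Two smaller points in necessity.}
\begin{enumerate}
\item One extra sinc factor only yields $|\xi|^N|\widehat\phi(\xi)|\le M_{N+1}/|\xi|$, which is not integrable. Either use two extra factors, or argue in physical space: with $\rho_\lambda=\frac{\lambda}{2}\chi_{[-1/\lambda,1/\lambda]}$ one has $\|(\rho_\lambda*\psi)'\|_\infty\le\lambda\|\psi\|_\infty$, which gives $\|\phi^{(N)}\|_\infty\le\frac12 M_{N+1}$.
\item For the index shift, $M_{n+1}\le\beta^nM_n$ fails for general fast-growing non-quasi-analytic sequences, and building the product from $\lambda_{n+1}$ makes the bound worse. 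Instead prepend an extra copy of the $\lambda_1$-factor (factors $\lambda_1,\lambda_1,\lambda_2,\dots$). In the physical-space argument this gives $\|\phi^{(N)}\|_\infty\le\frac12\lambda_1M_N$.
\end{enumerate}
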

	
	In view of above theorem,  we call a log-convex sequence $\mathcal M$ quasi-analytic if the associated class of smooth functions $\widetilde C_{\mathcal M}(I)$ is quasi-analytic,  i.e.  if (\ref{quasi-cond}) holds. Since the quasi-analyticity condition restricts how fast the derivatives of a function can grow, we may assume that $M_n$ is non-decreasing.  Later,  Bochner and Taylor extended Theorem \ref{thm-DC} to $\R^d$ and other spaces \cite{B,  BT}, in which various special differential operators were constructed to replace $d/dx$ used in the definition (\ref{defn-C}).   Some important developments in this direction are the study of analytic vectors of elliptic operators on Lie groups by Nelson \cite{N} and subsequent work on quasi-analytic vectors by Nussbaum  \cite{Nu} in the set-up of operator theory on Hilbert spaces.  In this article,  we will be concerned with the following result of Chernoff,  which he obtained as part of his study on the relationship between operator-theoretic developments on quasi-analytic vectors and quasi-analytic functions.   Let $\Delta_{\R^d}$ be the Laplacian on $\R^d$. 
	\begin{thm} \cite[Theorem 6.1]{Ch}, \label{thm-cher}
		Let $f:\R^d\to\C$ be a smooth function such that
		for all $n\in \N_0$,  $\Delta_{\R^d}^n f\in L^2(\R^d)$, and
		\be \label{Carl-cond-Rn}
		\sum_{n\in \N}\|\Delta_{\R^d}^n f\|_{L^2(\R^d)}^{-\frac{1}{2n}}=\infty.
		\ee
		If there exists $x$ in $\R^d$, such that $f$ and all its partial derivatives $\partial^{\alpha}f=\frac{\partial^{\alpha_1+\ldots +\alpha_d}f}{\partial x_1^{\alpha_1}\ldots\partial x_d^{\alpha_d}}$, $\alpha\in \N_0^d$,  vanish at $x$ then $f$ vanishes identically on $\R^d$.
	\end{thm}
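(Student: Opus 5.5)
The plan is to reduce to the one-variable Denjoy--Carleman theorem (Theorem \ref{thm-DC}) by restricting $f$ to lines through $x$, with all estimates done on the Fourier side. Assume $f\not\equiv 0$ and write $\hat f$ for its Fourier transform, so that $\|\Delta_{\R^d}^n f\|_{L^2}=c\,\||\xi|^{2n}\hat f\|_{L^2}$. For real $s\ge 0$ set
\bes
M_s=\big\||\xi|^{s}\hat f\big\|_{L^2(\R^d)}.
\ees
These are finite, since all $\Delta^n f\in L^2$, and positive. The first step is log-convexity: $M_s^2\le M_{s-t}M_{s+t}$ follows from Cauchy--Schwarz applied to $|\xi|^{2s}|\hat f|^2=(|\xi|^{s-t}|\hat f|)(|\xi|^{s+t}|\hat f|)$. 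Hence $\{M_k\}_{k\in\N_0}$ is log-convex, and the ratio $M_k/M_{k-1}$ is non-decreasing.

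Next I would get sup-norm bounds along lines. Fix a unit vector $\omega$ and put $\varphi_\omega(t)=f(x+t\omega)$. Then $\varphi_\omega^{(k)}(t)$ is the inverse Fourier transform of $(i\omega\cdot\xi)^k\hat f$ evaluated at $x+t\omega$. By Cauchy--Schwarz with the weight $(1+|\xi|^2)^{-d}$,
\bes
\|\varphi_\omega^{(k)}\|_{L^\infty(\R)}\le \int|\xi|^k|\hat f|\,d\xi\le C_d\Big(\int|\xi|^{2k}(1+|\xi|^2)^{d}|\hat f|^2\,d\xi\Big)^{1/2}\le C'_d\,(M_k+M_{k+d}).
\ees
Using that $M_k/M_{k-1}$ is non-decreasing, for $k$ large we have $M_k\le C M_{k+d}$. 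So, with $N_k:=M_{k+d}$, there is a constant $B$ with $\|\varphi_\omega^{(k)}\|_\infty\le B\,N_k$ for all $k$. Thus $\varphi_\omega\in\widetilde C_{\mathcal N}(\R)$, where $\mathcal N=\{N_k\}$, which is again log-convex.

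The key step, and the one I expect to need the most care, is converting the hypothesis (\ref{Carl-cond-Rn}), $\sum_n M_{2n}^{-1/(2n)}=\infty$, into the Denjoy--Carleman condition for $\mathcal N$. I would first pass to the ratio form. For log-convex sequences, $\sum_k M_k^{-1/k}=\infty$ is equivalent to $\sum_k M_{k-1}/M_k=\infty$, as recorded in Theorem \ref{thm-DC}. The sum over even indices dominates, up to a factor, the full sum: $M_{2n+1}^{1/(2n+1)}$ is bounded by $M_{2n+2}^{1/(2n+2)}$ times a harmless factor, via log-convexity and $M_0$. The ratio form is invariant under the shift $k\mapsto k+d$, because the terms $M_{k-1}/M_k$ are monotone and only finitely many terms are dropped. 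Hence $\sum_k N_{k-1}/N_k=\infty$, and $\mathcal N$ is quasi-analytic. I would double-check the equivalence of the two forms of (\ref{quasi-cond}) for sequences that are only eventually non-decreasing; normalizing $M_0=1$ by scaling $f$ handles that.

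Finally, the derivatives $\varphi_\omega^{(k)}(0)$ are linear combinations of the $\partial^\alpha f(x)$ with $|\alpha|=k$, so they all vanish. Theorem \ref{thm-DC} then gives $\varphi_\omega\equiv 0$ for every $\omega$. Since every point of $\R^d$ lies on some line through $x$, we conclude $f\equiv 0$.
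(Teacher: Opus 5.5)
Your proof is correct. The paper does not prove this statement (it is quoted from Chernoff), but your argument is exactly the Euclidean version of the paper's proof of Theorem \ref{thm-cher-G}. That proof uses a Fourier-side Sobolev bound along directions that loses finitely many derivatives (Lemma \ref{lem-Sobolev}), followed by Denjoy--Carleman on each line (one-parameter subgroup) through the vanishing point; your real-indexed log-convex moments $M_s$ play the role of the paper's geometric means $\sqrt{M_{2d+n-1}M_{2d+n+1}}$.

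One small slip: your remark that the even-indexed sum ``dominates'' the full sum is misstated, and it is also not needed. $\sum_n M_{2n}^{-1/(2n)}$ is simply a subseries of $\sum_k M_k^{-1/k}$. With $M_0=1$, Carleman's inequality $\sum_k (a_1\cdots a_k)^{1/k}\le e\sum_k a_k$ applied to $a_k=M_{k-1}/M_k$ then gives $\sum_k M_{k-1}/M_k=\infty$, with no monotonicity assumption required.
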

	The above result differs from earlier works in several respects.  In particular,  in contrast with (\ref {defn-C}),  the growth estimate now involves only powers of the Laplacian,  and the $L^2$-norm replaces $L^\infty$-norm.   Theorem \ref{thm-cher} was previously extended to Riemannian symmetric spaces in joint work of the first author with S. Pusti and S. K. Ray \cite{BPR1, BPR},  under certain symmetry assumptions on the function class.  Subsequently,  the result has been extended in generality to symmetric spaces of noncompact type by R. P. Sarkar \cite{Sarkar} without imposing such symmetry assumptions; see also \cite{GMT} for the rank-one case.   We record the following result in the context of Riemannian symmetric spaces $G/K$ of compact type for our future use.   Here,  $\Delta$ denotes the Laplace-Beltrami operator on $G/K$, and we let ${\mathbb D}(G/K)$ denote the algebra of $G$-invariant differential operators on $G/K$.   
	\begin{thm} \cite[Theorem 1.5]{BPR1}\label{thm-cher-com}
		Let $f\in C^\infty(G/K)$ be a left $K$-invariant function which satisfies the condition
		\be \label{Carl-cond-com}
		\sum_{m\in \mathbb N} \|\Delta^m f\|_{L^p(G/K)}^{-\frac{1}{2m}} =\infty,
		\ee
		for some $p\in [1,\infty]$. If $Df$ vanishes at the identity coset $o$ for all $D\in {\mathbb D}(G/K)$, then f vanishes identically.
	\end{thm}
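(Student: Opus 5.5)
Since $f$ is left $K$-invariant on the compact symmetric space $G/K$, I will expand it in zonal spherical functions, $f=\sum_{\lambda} d_\lambda\, \widehat f(\lambda)\,\varphi_\lambda$. The sum runs over the $K$-spherical highest weights $\lambda$, and $\varphi_\lambda$ is the associated spherical function, normalized by $\varphi_\lambda(o)=1$. Each $\varphi_\lambda$ is a joint eigenfunction of $\mathbb D(G/K)$, and $\Delta\varphi_\lambda=-(|\lambda+\rho|^2-|\rho|^2)\varphi_\lambda$. The hypothesis that $Df(o)=0$ for all $D\in\mathbb D(G/K)$ translates into vanishing moments. For every polynomial $P$ invariant under the Weyl group (these are exactly the eigenvalue functions of $\mathbb D(G/K)$ via the Harish-Chandra isomorphism), we get
\[
\sum_\lambda d_\lambda\,\widehat f(\lambda)\,P(\lambda+\rho)=0 .
\]
Since $f$ is smooth, $\widehat f$ decays rapidly, so these sums converge absolutely. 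The goal is then to show that $\widehat f\equiv 0$.

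The first key step is to transfer the Carleman condition on $\|\Delta^m f\|_{L^p}$ into a usable bound. For $p\ge 2$ we have $\|\cdot\|_{L^2}\le\|\cdot\|_{L^p}$ on the compact space, so the condition with $p=2$ follows. For $p<2$ I will instead control the Fourier coefficients directly: $|\widehat{\Delta^m f}(\lambda)|\le \|\Delta^m f\|_{L^1}\le\|\Delta^m f\|_{L^p}$, since $|\varphi_\lambda|\le1$. In either case we obtain a log-convex-type sequence $M_m=\|\Delta^m f\|$ with $\sum M_m^{-1/(2m)}=\infty$. Regularizing, I will replace $M_m$ by its largest log-convex minorant; that this preserves divergence is a standard step.

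Next, fix an arbitrary $\lambda_0$. I will build a one-variable function encoding $f$ along the ``radial'' spectral variable. Put $F(t)=\sum_\lambda d_\lambda\widehat f(\lambda)\, e^{it\,|\lambda+\rho|}$, or more symmetrically use $\cos(t\sqrt{-\Delta+|\rho|^2})f$ evaluated at $o$. Then $F^{(2m)}(0)$ is, up to sign, $\big((-\Delta+|\rho|^2)^m f\big)(o)$. This is a Weyl-invariant polynomial moment of the form above, hence $0$ for all $m$; the odd derivatives of the cosine version vanish trivially. The Carleman bound gives $\|F^{(2m)}\|_\infty\lesssim$ a sum of $\|\Delta^k f\|$ with binomial weights. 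Here I need Sobolev embedding to pass from $L^2$ norms to pointwise control, at the cost of a fixed shift in $m$, which does not affect divergence. So $F$ lies in a quasi-analytic class, and Theorem~\ref{thm-DC} forces $F\equiv0$.

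The main obstacle is that $F\equiv0$ only kills the combinations $\sum_{|\lambda+\rho|=r} d_\lambda\widehat f(\lambda)$ over each sphere. This is not enough when several spherical weights share the same norm, i.e. in higher rank. To overcome this I will use the full algebra $\mathbb D(G/K)$. I will choose finitely many generators $D_1,\dots,D_\ell$ and apply the same argument to $P(D)f$ for each Weyl-invariant polynomial $P$. The function $P(D)f$ still vanishes with all invariant derivatives at $o$. It also still satisfies the Carleman condition, since $\|\Delta^m P(D)f\|\le C\|\Delta^{m+N}f\|$ by elliptic estimates and the shift is harmless. This yields $\sum_{|\lambda+\rho|=r}d_\lambda\widehat f(\lambda)P(\lambda+\rho)=0$ for all $r$ and all $P$. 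The invariant polynomials separate Weyl orbits of the finitely many points on each sphere, so every $\widehat f(\lambda)=0$, and therefore $f\equiv0$. I expect the delicate points to be the uniformity of the $L^p\to$ pointwise passage for $p<2$ and the invariance of the Carleman divergence under fixed index shifts.
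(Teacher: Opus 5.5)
Your argument is correct in outline, but it does not follow the paper. The paper gives no proof of this theorem; it quotes it from \cite{BPR1}.

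The paper's own technique, for the analogous Theorem~\ref{thm-cher-GH}, is geometric. Restrict $f$ to the geodesics $t\mapsto \exp(tX)\cdot o$ with $X\in\mathfrak q$ a unit vector, and bound $\frac{d^n}{dt^n}f(\exp(tX)\cdot o)$ uniformly via the Fourier-series Sobolev estimate of Lemma~\ref{lem-Sobolev-GH}. Then apply Denjoy--Carleman on each line and conclude by surjectivity of $Exp$. That method needs no $K$-invariance, but it uses the vanishing of \emph{all} directional derivatives at $o$. To get the present statement from it you would have to show that, for left $K$-invariant $f$, vanishing of $Df(o)$ for all $D\in\mathbb D(G/K)$ already forces flatness at $o$. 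This holds because the Taylor polynomials of $f\circ Exp$ on $\mathfrak p$ are $Ad(K)$-invariant, and the operator $D_P$ attached to $P\in S(\mathfrak p)^K$ satisfies $D_Pf(o)=\partial(P)(f\circ Exp)(0)$, which is a nondegenerate pairing on $K$-invariants.

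Your spectral route uses the hypothesis exactly as stated, namely as vanishing of all $W$-invariant moments of $d_\lambda\widehat f(\lambda)$. It needs quasi-analyticity only in the single radial variable $|\lambda+\rho|$, through $t\mapsto(\cos(t\sqrt{-\Delta+|\rho|^2})f)(o)$. Higher rank is handled by finiteness of each sphere plus separation of $W$-orbits; you should say explicitly that $\lambda+\rho$ is strictly dominant, so distinct $\lambda$ give distinct orbits. The route is tied to the $K$-invariant setting, but the hypothesis forces that anyway: $Df(o)$ for $D\in\mathbb D(G/K)$ only sees the $K$-average of $f$.

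Several steps can be tightened.
\begin{itemize}
\item No case split in $p$ is needed. Since $\|\cdot\|_{L^1}\le\|\cdot\|_{L^p}$, you always have $|\widehat f(\lambda)|\Lambda_\lambda^k\le\|\Delta^kf\|_{L^1}$, where $\Lambda_\lambda=|\lambda+\rho|^2-|\rho|^2$.
\item Replacing an upper bound by a smaller log-convex minorant is not justified just because divergence is preserved. It needs an extra theorem (Cartan--Gorny, or Carleman's criterion for irregular sequences).
\item You can avoid both the binomial weights and the regularization. Put $T_n=\sum_\lambda d_\lambda|\widehat f(\lambda)|\,|\lambda+\rho|^n$. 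Then $\|F^{(n)}\|_\infty\le T_n$, and $T_n$ is log-convex by Cauchy--Schwarz. The spectral gap $\Lambda_\lambda\ge\lambda_1>0$ gives $|\lambda+\rho|^2\le(1+|\rho|^2/\lambda_1)\Lambda_\lambda$ for $\lambda\ne0$. Hence $T_{2m}\le C^{m+1}\|\Delta^{m+N}f\|_{L^1}+|\rho|^{2m}|\widehat f(0)|$, once $N$ is chosen with $\sum_{\lambda\ne0}d_\lambda\Lambda_\lambda^{-N}<\infty$.
\item The index shift you flagged is harmless. With $x_j=\|\Delta^jf\|_{L^1}^{-1/(2j)}$, the terms with $x_j<j^{-2}$ are summable. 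For the others, $x_j^{j/(j-N)}\ge x_j\,j^{-2N/(j-N)}$, and the factor $j^{-2N/(j-N)}$ tends to $1$.
\item For $P(D)f$, argue at the coefficient level using $|P(\lambda+\rho)|\le C\Lambda_\lambda^{N'}$ for $\lambda\ne0$. Elliptic estimates are the wrong justification at $p=1,\infty$.
\item The passage from $F\equiv0$ to vanishing sphere sums should be made explicit. Take mean values of $F(t)\cos(rt)$; this is uniqueness of coefficients for absolutely convergent almost periodic series.
\end{itemize}
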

In this paper, we introduce a notion of quasi-analytic function classes on compact Lie groups,  extending earlier results established for particular cases.  Let $G$ be a compact, connected Lie group with Lie algebra $\mathfrak g$.  We note that the Casimir operator $\mathcal L$ on  $G$ serves as an analogue of the Euclidean Laplacian.  The group $G$ is equipped with the normalized Haar measure $\mu$,  so that $\mu(G)=1$.  For $p\in [1, \infty]$,  we consider the following class of smooth functions on $G$ associated with a quasi-analytic sequence $\mathcal M=\{M_n\}_{n=0}^\infty$ 
	\bes
	\widetilde C_{\mathcal M, p}(G)=\left\{f\in C^\infty(G): \|\mathcal L^nf\|_{L^p(G)} \leq B_f \beta_f^n M_{2n},  \:\: \forall n \in \N_0 \right\}.
	\ees
	As before, the positive constants $B_f, \beta_f$ depend on $f$ but not on $n$.  Every element $D$ in the universal enveloping algebra $\mathcal U(\mathfrak g)$ acts on $f\in C^\infty(G)$ as a left- and right-invariant differential operator $\widetilde D$ and $d_r(D)$ respectively; see (\ref{defn-dr}) for the precise definition.  In this article,  we prove the following strong unique continuation result on $G$.
	\begin{thm} \label{thm-cher-G}
		Suppose $f\in \widetilde C_{\mathcal M, p}(G)$ for some $p\in [1, \infty]$.  If there exists $g$ in $G$ such that $\widetilde D f$ vanishes at $g$,  for all $D\in \mathcal U(\mathfrak g)$,  then $f$ is identically equal to zero.  
	\end{thm}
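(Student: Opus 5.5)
The plan is to reduce to the compact symmetric space result, Theorem \ref{thm-cher-com}, by realizing $G$ as the symmetric space $(G\times G)/\Delta(G)$ with $K=\Delta(G)$. In this picture, left $K$-invariant functions are exactly the class functions on $G$, the Laplace--Beltrami operator is a constant multiple of $\mathcal L$, and $\mathbb D(G/K)$ is generated by bi-invariant operators. We then recover all of $f$ from class functions built out of derivatives of $f$.

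\textbf{Step 1: reduction to $g=e$.} Left translation commutes with every $\widetilde D$ and with $\mathcal L$ (which is bi-invariant), and it preserves $L^p$ norms. So we may assume $g=e$. At $e$, the left-invariant derivatives and the right-invariant ones span the same space of Taylor coefficients. Hence $d_r(E)\widetilde D f(e)=0$ for all $D,E\in\mathcal U(\mathfrak g)$.

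\textbf{Step 2: the averaged functions.} For $D\in\mathcal U(\mathfrak g)$ set $f_D=d_r(D)f$ and
\[
F_D(x)=\int_G f_D(kxk^{-1})\,d\mu(k),
\]
which is a smooth class function. Every $G$-invariant differential operator on $G/K$ acts on $F_D$ through bi-invariant operators. Each value of such a derivative at $e$ is therefore an average of values of derivatives of $f$ at $e$, and these vanish by Step 1. Since $\mathcal L$ commutes with conjugation, Minkowski's inequality gives $\|\mathcal L^nF_D\|_p\le\|\mathcal L^n f_D\|_p$.

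\textbf{Step 3: the Carleman condition for $F_D$ (main obstacle).} The main obstacle is to bound $\|\mathcal L^n f_D\|_p$ by the norms $\|\mathcal L^{n'}f\|_p$. Because $d_r(D)$ commutes with $\mathcal L$ and $\mathcal L$ is elliptic, I would use an $L^p$ elliptic/Sobolev estimate
\[
\|d_r(D)h\|_{L^p}\le C_D\big(\|\mathcal L^{m}h\|_{L^p}+\|h\|_{L^p}\big),
\]
with $m$ depending on the order of $D$. For $p\in(1,\infty)$ this follows from Calder\'on--Zygmund theory. For $p=1,\infty$ I would lose a little regularity and raise $m$ by one, using $L^\infty\subset L^2\subset L^1$ on compact $G$ together with the Sobolev embedding. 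Applied to $h=\mathcal L^n f$, this gives
\[
\|\mathcal L^nF_D\|_p\le C B_f\beta_f^{n+m}\big(M_{2n+2m}+M_{2n}\big)\le 2CB_f\beta_f^{n+m}M_{2n+2m},
\]
using that $M_n$ is non-decreasing. I then check that $\sum_n (M_{2n+2m})^{-1/2n}=\infty$. Log-convexity makes $M_n^{1/n}$ essentially non-decreasing, so the sum over even indices is comparable to the full sum in (\ref{quasi-cond}), and a finite index shift does not affect divergence (this uses $M_{n+k}\le M_n (M_{n+k}/M_{n+k-1})^k$ and a standard comparison). The constants $(2CB_f\beta_f^{n+m})^{-1/2n}$ stay bounded below. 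Hence (\ref{Carl-cond-com}) holds for $F_D$, and Theorem \ref{thm-cher-com} gives $F_D\equiv 0$ for every $D$.

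\textbf{Step 4: from $F_D\equiv0$ to $f\equiv0$.} Expand in characters. Up to normalization, the $\pi$-th Fourier component of the class function $F_D$ is $\operatorname{tr}\big(\hat f_D(\pi)\big)\chi_\pi$. Moreover $\hat f_D(\pi)$ equals $\hat f(\pi)\pi(D)$, or $\pi(D)\hat f(\pi)$, depending on conventions. So $\operatorname{tr}\big(\hat f(\pi)\pi(D)\big)=0$ for all $D\in\mathcal U(\mathfrak g)$. Since $\pi$ is irreducible, Burnside's theorem gives $\pi(\mathcal U(\mathfrak g))=\operatorname{End}(V_\pi)$. Hence $\hat f(\pi)=0$ for every $\pi$, and $f\equiv0$ by Peter--Weyl.
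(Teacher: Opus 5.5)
\textbf{There is a genuine gap: the reduction only covers semisimple $G$.} Your argument rests on Theorem \ref{thm-cher-com}, which the paper records for Riemannian symmetric spaces of \emph{compact type}. The space $(G\times G)/\Delta(G)$ is of compact type only when $G$ is semisimple. The statement, however, is for an arbitrary compact connected $G$, such as $\mathbb T^d$ or $U(n)$, where the identity component of the centre gives a flat factor.

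For $G=\mathbb T^d$ your reduction says nothing. Conjugation is trivial, so $F_D=d_r(D)f$, and $\mathbb D(G/K)$ is the algebra of constant-coefficient operators. Invoking Theorem \ref{thm-cher-com} there means assuming a Chernoff-type theorem on the flat torus, which is a special case of what you are asked to prove.

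For mixed groups like $U(n)$, the central directions cannot be split off cheaply either. Conjugation-averaging does not see them. Taking Fourier coefficients in the central variable destroys the hypothesis, because derivatives of such a coefficient at $e$ involve $f$ along the whole central torus, not just at $e$.

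There is a smaller point of the same kind. For semisimple but non-simple $G$, the Laplace--Beltrami operator of the symmetric space is a constant multiple of $\mathcal L$ only if the chosen $Ad$-invariant inner product matches, up to one common scalar on every simple factor, the metric used in Theorem \ref{thm-cher-com}. Otherwise you need an extra comparison of $\|\Delta^nF_D\|_p$ with powers of $\mathcal L$.

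\textbf{What works, and how the paper avoids the problem.} In the semisimple case the rest of your argument is sound: the identity $d_r(E)\widetilde Df(e)=\widetilde{ED}f(e)$, the averaging, the index-shift argument for the Carleman sum, and the Burnside step. The paper's proof, though, never leaves one dimension and has no such restriction. For $X\in\mathcal S^{d-1}$ it sets $F_X(t)=f(\exp tX)$. Then $F_X^{(n)}(t)=(\widetilde X^nf)(\exp tX)$ and $F_X^{(n)}(0)=\widetilde{X^n}f(e)=0$.

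Lemma \ref{lem-Sobolev} is a Fourier-series estimate bounding $\|\widetilde X^nf\|_{L^\infty}$ by $\mathfrak C^{n+1}\|\mathcal L^{d+n/2}f\|_{L^1}$, with a geometric mean for odd $n$. Together with $\|\cdot\|_{L^1}\le\|\cdot\|_{L^p}$ it gives $\|F_X^{(n)}\|_\infty\le B\beta^n\sqrt{M_{2d+n-1}M_{2d+n+1}}$, which is a quasi-analytic sequence. Denjoy--Carleman then gives $F_X\equiv 0$, and surjectivity of $\exp$ finishes the proof.

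This route uses only the symmetrized derivatives $\widetilde{X^n}f(e)$. It needs no elliptic $L^p$ theory, no symmetric-space theorem and no representation-theoretic recovery step. It also handles the central torus directions that your reduction cannot reach.
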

	
	\begin{rem}
		\begin{enumerate}
\item[(i)] Theorem \ref{thm-cher-G} remains valid if $\widetilde D$ is replaced by $d_r(D)$.		
		
			\item[(ii)] In \cite[Sec. 7.2]{Jeu},  Marcel de Jeu proposes an interpretation of the quasi-analyticity on Lie groups.  Precisely,  he asks what conditions on a smooth function $f$ on a connected Lie group $G$ are sufficient to ensure that $f$ vanishes identically if $Df (e) = 0$ for all $D$ in the universal enveloping algebra of $G$ ? Theorem \ref{thm-cher-G} answers this question for compact,  connected Lie groups. 
			
			\item[(iii)] An elliptic regularity theorem due to T.  Koteke and M.S. Narashimhan \cite {KN} shows that  given an open set $\Omega \subset \R^d$ and a linear elliptic partial differential operator $P$ of order $m$ with analytic coefficients in $\Omega\subset \R^d$,  every smooth function $u$ which satisfies
			\bes
			\|P^k u\|_{L^2(\Omega)} \leq (km)! c^{k+1},  
			\ees
			for every  nonnegative integer $k$,  with $c$ independent of $k$,  is analytic in $\Omega$.  This theorem has been generalized in various directions by several authors. Theorem \ref{thm-cher-G} may be regarded as an analogue of this result for quasi-analytic functions on compact Lie groups.
\end{enumerate}
\end{rem}
	
	We now turn to the homogeneous spaces $G/H$, where $H$ is any closed subgroup of $G$.  Let $\mathfrak h$ denote the Lie algebra of $H$ and $\mathfrak q$ be its orthogonal complement in $\mathfrak g$ with respect to the $Ad$-invariant inner product on $\mathfrak g$.  The restriction of this inner product to $\mathfrak q$ induces a $G$-invariant Riemannian metric on $G/H$.  Denote by 
$\widetilde{\mathcal L}$ the corresponding Laplace-Beltrami operator on $G/H$.  Let 
$\pi: G \ra G/H$ be the canonical projection, and let 
 $\widetilde \mu$ be the pushforward of the normalized Haar measure $\mu$ on $G$ under $\pi$;  explicitly,  $\widetilde{\mu}(A)=\mu(\pi^{-1}(A))$ for a measurable subset $A\subseteq G/H$.  Given a quasi-analytic sequence $\mathcal M=\{M_n\}_{n=0}^\infty$,  we define the quasi-analytic class on $G/H$ associated with $\mathcal M$ as follows:
	\bes
	\widetilde C_{\mathcal M, p}(G/H)=\left\{f\in C^\infty(G/H): \| \widetilde{\mathcal L}^n f\|_{L^p(G/H)} \leq B_f \beta_f^n M_{2n},  \:\: \forall n \in \N_0 \right\}.
	\ees
 When $D\in \mathcal U(\mathfrak q)$,  the right-invariant operator $d_r(D)$ descends naturally to a differential operator on $G/H$.  We recall that any function $f$ on $G/H$ can be identified with a right $H$-invariant function on $G$.   In particular,  functions in the class $\widetilde C_{\mathcal M, p}(G/H)$ possess an additional symmetry compared to those in $\widetilde C_{\mathcal M, p}(G)$.  This suggests that,  to establish a unique continuation result on $G/H$,  it may not be necessary to impose the vanishing condition for every $D \in \mathcal U(\mathfrak g)$.  The following theorem confirms this expectation.  
	\begin{thm} \label{thm-cher-GH}
		Suppose $f\in \widetilde C_{\mathcal M, p}(G/H)$ for some $p\in [1, \infty]$.  If $d_r(D)f$ vanishes at $o=eH$ for all $D \in \mathcal U(\mathfrak q)$,  then $f$ is identically equal to zero.  
	\end{thm}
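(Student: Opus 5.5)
Our plan is to reduce Theorem \ref{thm-cher-GH} to the group case, Theorem \ref{thm-cher-G}, by lifting $f$ to the right $H$-invariant function $F=f\circ\pi$ on $G$. Two things must be checked: that $F$ lies in $\widetilde C_{\mathcal M,p}(G)$, and that the vanishing hypothesis for $D\in\mathcal U(\mathfrak q)$ at $o$ forces $F$ and all its derivatives $d_r(D)F$, $D\in\mathcal U(\mathfrak g)$, to vanish at $e$. By Remark (i) after Theorem \ref{thm-cher-G}, the conclusion of that theorem holds with $d_r(D)$ in place of $\widetilde D$, so these two facts give $F\equiv0$, hence $f\equiv 0$.

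For the first point, fix an orthonormal basis $X_1,\dots,X_k$ of $\mathfrak h$ and $X_{k+1},\dots,X_N$ of $\mathfrak q$. The Casimir operator is $\mathcal L=\sum_{i=1}^N d_r(X_i)^2$ (up to sign convention). Since $F$ is right $H$-invariant, $d_r(X_i)F=0$ for $i\le k$, so $\mathcal L F=\sum_{i>k}d_r(X_i)^2F$. The standard fact about normal homogeneous metrics is that this last operator is exactly the lift of the Laplace--Beltrami operator $\widetilde{\mathcal L}$ of the induced metric on $G/H$, i.e.\ $\mathcal L F=(\widetilde{\mathcal L}f)\circ\pi$. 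Note that $\widetilde{\mathcal L}f$ again lifts to a right $H$-invariant function, so iterating gives $\mathcal L^nF=(\widetilde{\mathcal L}^nf)\circ\pi$. Since $\widetilde\mu$ is the pushforward of $\mu$, we get $\|\mathcal L^nF\|_{L^p(G)}=\|\widetilde{\mathcal L}^nf\|_{L^p(G/H)}$. Hence $F\in\widetilde C_{\mathcal M,p}(G)$ with the same constants. The one point requiring care is the identification of $\widetilde{\mathcal L}$ with $\sum_{i>k}d_r(X_i)^2$ on lifts, which holds without any symmetric-space assumption; this is the step we expect to be the main obstacle.

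For the vanishing, we use the Poincar\'e--Birkhoff--Witt theorem with the ordered basis above: every $D\in\mathcal U(\mathfrak g)$ is a linear combination of monomials $Q\,Y$, where $Q$ is a monomial in $X_{k+1},\dots,X_N$ and $Y$ is a monomial in $X_1,\dots,X_k$ placed to the right. Applying $d_r$ (a homomorphism), the factor $d_r(Y)$ acts first. If $Y$ is nonconstant, the right-invariance of $F$ under $H$ gives $d_r(Y)F=0$. If $Y$ is constant, then $d_r(D)F(e)$ reduces to $d_r(Q)F(e)=d_r(Q)f(o)=0$ by hypothesis. Thus all $d_r(D)F$ vanish at $e$.

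The remaining subtlety is the order convention in $d_r$, namely which factor acts first. If the convention is opposite, we put the $\mathfrak h$-monomials on the other side in PBW and argue the same way. The vanishing for $\mathfrak h$-factors uses only that $F(gh)=F(g)$, which kills differentiation along right translations by $\exp tX$, $X\in\mathfrak h$.
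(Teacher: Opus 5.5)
Your reduction to Theorem \ref{thm-cher-G} through $F=f\circ\pi$ is a workable strategy. However, the fact you use for both verifications is false for the operators $d_r$ of (\ref{defn-dr}). There $d_r(X)F(g)=\frac{d}{dt}\big|_{t=0}F(\exp(tX)\,g)$ differentiates along \emph{left} translations. Since $F(\exp(tX)g)=F(g\exp(t\,Ad(g^{-1})X))$ and in general $Ad(g^{-1})X\notin\mathfrak h$, right $H$-invariance does not make $d_r(X)F$ vanish for $X\in\mathfrak h$. What it kills is the left-invariant field $\widetilde XF(g)=\frac{d}{dt}\big|_{t=0}F(g\exp tX)$; your closing sentence, about right translations, describes $\widetilde X$, not $d_r(X)$.

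For instance, take $S^2=SO(3)/SO(2)$ with $H$ the rotations about the $x_3$-axis. For $X\in\mathfrak h$, $d_r(X)$ descends to the rotation field $L_3=x_1\partial_2-x_2\partial_1$, and $L_3x_1=-x_2\neq0$. Correspondingly, your formula $\mathcal LF=\sum_{i>k}d_r(X_i)^2F$ drops $L_3^2$ from $L_1^2+L_2^2+L_3^2$, although $L_3^2x_1=-x_1\neq0$. So the assertion that $d_r(Y)F=0$ for every nonconstant $\mathfrak h$-monomial $Y$ is false, and the vanishing argument built on it collapses. Your fallback on conventions does not rescue it. By (\ref{defn-dr}), $d_r(X_1X_2)=d_r(X_2)\circ d_r(X_1)$, so $d_r$ is an anti-homomorphism. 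Moving the $\mathfrak h$-monomials to the left therefore makes $d_r(Y)$ act first on $F$, exactly where it does not vanish.

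Both conclusions you want are nevertheless true, and the repair is short.
\begin{itemize}
\item At $g=e$ the two formulas in (\ref{defn-dr}) coincide. For $D=Z_1\cdots Z_j$, both $d_r(D)F(e)$ and $\widetilde DF(e)$ equal the mixed derivative $\partial_{t_1}\cdots\partial_{t_j}$ at $t=0$ of $F(\exp t_1Z_1\cdots\exp t_jZ_j)$.
\item Expand $D$ by PBW into terms $QY$, with $Y=Y_1\cdots Y_b$ an $\mathfrak h$-monomial on the right. If $b\geq1$, the function $F(\exp t_1Q_1\cdots\exp t_aQ_a\exp s_1Y_1\cdots\exp s_bY_b)$ does not depend on $s$, so the derivative is $0$. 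If $b=0$, the term equals $d_r(Q)f(o)=0$ by hypothesis.
\item Hence $\widetilde DF(e)=0$ for all $D\in\mathcal U(\mathfrak g)$, and Theorem \ref{thm-cher-G} applies as stated. You do not need Remark (i), which the paper does not prove.
\item In the first step, use $\widetilde X_iF=0$ for $i\leq k$ to get $\mathcal LF=\sum_{i>k}\widetilde X_i^2F$. The identity $\mathcal L(f\circ\pi)=(\widetilde{\mathcal L}f)\circ\pi$ is the Riemannian-submersion fact the paper itself asserts in Section 4.
\end{itemize}

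Repaired this way, your argument is a genuinely different proof. The paper stays on $G/H$: it proves the right-invariant Sobolev bound of Lemma \ref{lem-Sobolev-GH}, applies Denjoy--Carleman to $t\mapsto f(\exp(tX)H)$ for $X\in\mathfrak q$, and then needs surjectivity of $X\mapsto\exp(X)H$ on $\mathfrak q$. In particular it uses the hypothesis only for $D=X^n$. Your reduction reuses Theorem \ref{thm-cher-G} as a black box and avoids both the new lemma and the geodesic surjectivity on $G/H$. The price is the PBW bookkeeping and invoking the hypothesis for ordered $\mathfrak q$-monomials rather than just powers $X^n$.
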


\begin{rem}
\begin{enumerate}
\item[(i)] Let us specialize to the case where the homogeneous space is a Riemannian symmetric space of compact type $G/K$. To the best of our knowledge,  the strongest existing result on quasi-analytic functions on $G/K$ of arbitrary rank is Theorem \ref{thm-cher-com}, which applies solely to $K$-biinvariant functions.  In that setting,  the vanishing condition requires that $Df$ vanish at the identity coset  for all $G$-invariant differential operators on $G/K$.  In contrast, Theorem \ref{thm-cher-GH} above extends Theorem \ref{thm-cher-com} to arbitrary quasi-analytic functions on $G/K$,  without assuming $K$-biinvariance.  In this more general framework,  the vanishing condition involves all differential operators arising from $\mathcal U(\mathfrak q)$,  rather than being restricted to 
$G$-invariant ones.  
			
\item[(ii)] In the case of the $d$-dimensional sphere $S^{d-1}$,   an analogue of Theorem \ref{thm-cher-GH} was established in \cite[Theorem 1.9]{GMT}. 
\end{enumerate}
		
\end{rem}
	
	\subsection{Uniqueness results for quasi-analytic functions on $G$}
	We now turn to the second central theme of this paper: establishing quantitative uniqueness properties on compact Lie groups and their homogeneous spaces. A classical example in this direction is provided by the Logvinenko-Sereda theorem \cite{LS, P}, which represents a prototypical form of the uncertainty principle in harmonic analysis \cite{CCR, FS, T}. To present these results, we start with the following definition.
	
	\begin{defn}
		Let $\gamma, l$ be two positive numbers.  A Borel set $E\subset \R^d$ is $(\gamma, l)$- relatively dense if $|E\cap Q| \geq \gamma l^d$ for any cube $Q \subset \R^d$ of side-length $l$. Here, $|A|$ denotes the Lebesgue measure on $\R^d$ for any measurable $A\subset \R^d, d \geq 1$.
	\end{defn} 
	
	\begin{thm}[Paneah-Logvinenko-Sereda Theorem \cite{LS, P}] \label{thm-PLS}
		Fix $E\subset \R^d$. For every $N > 0$, there is a constant $C > 0$ such that
		\bes
		\|f\|_{L^2(\R^d)}\leq C \|f\|_{L^2(E)},\:\: \textit{ for every } f \textit{ with } supp (\mathcal F f) \subset B(0, N),
		\ees
		if and only if $E$ is $(\gamma, l)$-relatively dense for some $\gamma \in (0, 1)$ and $l>0$.  
	\end{thm}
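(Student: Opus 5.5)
The plan is to prove the two directions separately. Sufficiency is the substantial part and rests on Bernstein's inequality together with a Remez-type estimate for analytic functions. Necessity is a short construction with bandlimited functions concentrated on cubes that $E$ barely meets. Throughout, write $\mathcal F$ for the Fourier transform and $\|\cdot\|_2$ for the $L^2(\R^d)$-norm.

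\emph{Sufficiency.} Assume $E$ is $(\gamma,l)$-relatively dense and $\mathrm{supp}(\mathcal F f)\subset B(0,N)$. By Plancherel, Bernstein's inequality holds:
\bes
\|\partial^\alpha f\|_{2}\le (2\pi N)^{|\alpha|}\|f\|_2 .
\ees
Tile $\R^d$ by disjoint cubes $Q$ of side $l$. Call $Q$ \emph{good} if
\bes
\|\partial^\alpha f\|_{L^2(Q)}\le A^{|\alpha|}(2\pi N)^{|\alpha|}\|f\|_{L^2(Q)} \quad\textit{for all } \alpha,
\ees
with $A$ large and fixed. Summing Bernstein's inequality against the weights $A^{-2|\alpha|}$ over all $\alpha$ and all cubes shows that bad cubes carry at most half of $\|f\|_2^2$, provided $A$ is large enough (depending only on $d$). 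Hence it suffices to prove $\|f\|_{L^2(Q)}\le C\|f\|_{L^2(E\cap Q)}$ on each good cube, with $C$ uniform in $Q$.

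On a good cube, the Sobolev embedding upgrades the $L^2$ derivative bounds to sup-norm bounds of the same type, with a harmless loss. Pick a point $x_0\in Q$ where $|f(x_0)|\gtrsim l^{-d/2}\|f\|_{L^2(Q)}$. The Taylor series then shows that $f$ extends holomorphically to a complex polydisc around $Q$ of radius comparable to $l$. On that polydisc, $\sup|f|\le e^{c(1+Nl)}|f(x_0)|$. Now apply a Remez-type inequality for analytic functions of bounded growth (Nazarov's lemma, or the Kovrijkina/Kacnelson estimate), restricted to lines through $x_0$, using that $|E\cap Q|\ge\gamma l^d$. This gives
\bes
\sup_Q|f|\le \bigl(C/\gamma\bigr)^{c(1+Nl)}\sup_{E'}|f|
\ees
for a subset $E'\subset E\cap Q$ of measure at least $\gamma l^d/2$. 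Passing from sup-norms to $L^2$-norms costs only a Chebyshev-type argument applied to the sublevel set $\{|f|<\epsilon\sup_Q|f|\}$. Summing over good cubes gives $\|f\|_2^2\le 2\sum_{Q\ \mathrm{good}}\|f\|_{L^2(Q)}^2\le C\|f\|_{L^2(E)}^2$, with $C=(C'/\gamma)^{c(1+Nl)}$.

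\emph{Necessity.} Suppose the inequality holds but $E$ is not relatively dense for any $(\gamma,l)$. Fix a Schwartz function $\phi$ with $\mathrm{supp}(\mathcal F\phi)\subset B(0,N)$ and $\|\phi\|_2=1$. For each $l$ there are cubes $Q_l$ of side $l$ with $|E\cap Q_l|/l^d\to0$. Choose $l$ large so that $\int_{|x|>l/2}|\phi|^2$ is small, and let $f$ be the translate of $\phi$ centred at the centre of $Q_l$. Then $\|f\|_{L^2(E)}^2\le \|\phi\|_\infty^2|E\cap Q_l|+o(1)$. This is small once $|E\cap Q_l|$ is small for that fixed $l$, which contradicts the inequality since $\|f\|_2=1$.

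The main obstacle is the Remez-type step on good cubes. There one must produce a sup-over-$E$ bound whose constant depends only on $\gamma$ and $Nl$, uniformly over all good cubes. I would handle it by restricting to one-dimensional complex lines through $x_0$ and applying the classical Remez/Cartan lemma there, since $f$ has been shown to be bounded on a disc of fixed relative size.
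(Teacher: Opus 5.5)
The paper does not prove this statement; it quotes it from Logvinenko--Sereda and Paneah and points to Kovrijkine's localisation-plus-Remez method. Your proposal is precisely that argument (the same template the paper imitates in its proof of Theorem~\ref{thm-main}), and it is correct. Two wordings should be tightened. In the necessity part, fix $l$ first so that the tail of $\phi$ outside $B(0,l/2)$ is small, and only then pick a cube of side $l$ with $|E\cap Q|$ arbitrarily small; the phrase $|E\cap Q_l|/l^d\to 0$ conflates these two choices. In the Remez step, say explicitly that the line through $x_0$ is chosen by polar coordinates about $x_0$, as in \eqref{est-B-00}, so that $E$ has relative density $\gtrsim_d \gamma$ on it, and that this works for every subset of $E\cap Q$ of measure at least $\gamma l^d/2$, which is what the sublevel-set step needs.
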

	Here, $\mathcal F f$ denotes the Fourier transform of a function on $\R^d$.  Later,  using a different method, Kovrijkine refined the bounds established by Logvinenko-Sereda and achieved the optimal constant-first in the one-dimensional case \cite{K}, and later in his thesis for the multi-dimensional setting. In particular, the constant $C$ can be taken as
	$C= \left(c^d/\gamma \right)^{c d( l N+1)}$, for some universal constant $c>0$.  The key ingredient for the proof of Theorem \ref{thm-PLS},  as presented in \cite{K},  is the reduction to a Remez-type inequality for analytic functions via the localisation principle.
	
	Because of their applications in areas of applied analysis,  Theorem \ref{thm-PLS} has been translated into a Hilbert space setting. Let $M$ be a non-empty set,  $H$ a self-adjoint operator on $L^2(M),$ and let $P_H((-\infty,E]) := \chi_{(-\infty,E]}(H)$ be the associated spectral projection up to energy $E \in \R$.  We say that $H$ satisfies a spectral inequality from some measurable set $S \subset M$ if for all energies $E \geq 0$,  there is a constant $C > 0$,  depending only on $M, S, H,$ and $E$, such that
	\bes
	\|f\|_{L^2(M)} \leq C\|f\|_{L^2(S)}, \:\: \text{ for all } \:\: f \in Ran~P_H((-\infty, E]).
	\ees
	The spectral inequality for the Laplace-Beltrami operator on the $d$-dimensional sphere is established in \cite{DV}, leading to observability and null-controllability results for the spherical heat equation  with explicit estimates on the associated control costs.  In \cite{EV}, using techniques developed  in \cite{K},  an analogous inequality was derived for the $d$-dimensional torus. The original motivation for this development comes from the aim of formulating sharp uncertainty principles for spectral projections of Schr\"odinger operators,  particularly in the context of random Schr\"odinger operators (see,  for instance, \cite{RV} and the references therein).  For further studies on spectral inequalities, we refer the reader to \cite{DSV, EV, OP, R}.

	Our point of departure from these results is a uniqueness theorem of Jaye and Mitkovski \cite{JM}, who extended Theorem \ref{thm-PLS} to functions
	which, instead of being band-limited (i.e. $\mathcal F f$ is compactly supported),  have sufficiently fast decaying Fourier transforms.  The rate of decay was characterised in terms of a weight function.  Precisely,  they proved the following theorem.
	
	\begin{thm} \cite[Theorem 1.3]{JM} \label{thm-JM}
		Suppose a weight $W :[0, \infty) \ra [0, \infty]$ satisfies
		\begin{enumerate}
			\item[(i)] $W(0)=1$, $W$ is non-decreasing, $W$ is lower semi-continuous, and $\lim_{r\ra \infty} W(r) = \infty$.
			
			\item[(ii)] The mapping $\log r \ra \log W(r)$ is convex on $[1, \infty)$.
			
			\item[(iii)] The following integral
			\be \label{decay-weight}
			\int_{0}^\infty\frac{\log W(r)}{1+r^2}~dr=\infty.
			\ee
		\end{enumerate}
		Then,  for every $d \in \N, \gamma \in (0, 1), l > 0$ and $C_ W > 0$, there exists a finite constant $C = C(d, W, C_W , \gamma, l) > 0$ such that if $f \in L^2(\R^d)$ satisfies
		\be \label{est-hatf}
		\int_{\R^d} |\mathcal{F} f(\xi)|^2 ~W(|\xi|)^2~d \xi \leq C_W^2 \|f\|_{L^2(\R^d)}^2,
		\ee
		and $E$ is a $(\gamma, l)$-relatively dense set,  then
		\bes
		\|f\|_{L^2(\R^d)}\leq C \|f\|_{L^2(E)}.
		\ees
	\end{thm}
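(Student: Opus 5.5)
The proof I propose follows Kovrijkine's localisation scheme, with analyticity replaced by membership in a Denjoy--Carleman quasi-analytic class. The plan is to prove a Remez-type inequality on each good cube and then sum over cubes. By homogeneity, normalize $\|f\|_{L^2(\R^d)}=1$.

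\emph{Step 1: from the weight to a quasi-analytic sequence.} I will define $M_n=\sup_{r\ge 1} r^n/W(r)$. Hypothesis (ii), convexity of $\log W$ in $\log r$, makes $\{M_n\}$ log-convex and lets $W$ be recovered, up to constants, as $W(r)\asymp\sup_n r^n/M_n$. By the Ostrowski--Mandelbrojt correspondence, the divergence (\ref{decay-weight}) is equivalent to (\ref{quasi-cond}) for $\{M_n\}$. Since $|\xi^\alpha|\le |\xi|^{|\alpha|}\le M_{|\alpha|}W(|\xi|)$ for $|\xi|\ge1$, Plancherel and (\ref{est-hatf}) give
\bes
\|\partial^\alpha f\|_{L^2(\R^d)}\le (1+C_W)M_{|\alpha|}.
\ees
The number of multi-indices of length $n$ is at most $(n+1)^d\le 2^{dn}$. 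Hence, for $A=A(d)$ large,
\bes
\sum_{\alpha}\frac{\|\partial^\alpha f\|_{L^2}^2}{A^{2|\alpha|}M_{|\alpha|}^2}\le C_1(d,C_W).
\ees

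\emph{Step 2: good and bad cubes.} Tile $\R^d$ by cubes $Q$ of side $l$. Call $Q$ \emph{bad} if
\bes
\sum_\alpha\frac{\|\partial^\alpha f\|_{L^2(Q)}^2}{A^{2|\alpha|}M_{|\alpha|}^2}>2C_1\|f\|_{L^2(Q)}^2,
\ees
and good otherwise. Summing over the bad cubes and using Step 1 shows that they carry at most half of $\|f\|_2^2$. Therefore $\|f\|_{L^2(\R^d)}^2\le 2\sum_{Q \text{ good}}\|f\|_{L^2(Q)}^2$, and it suffices to prove $\|f\|_{L^2(Q)}\le C\|f\|_{L^2(E\cap Q)}$ uniformly over good cubes $Q$.

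On a good cube, every $\partial^\alpha f$ has $L^2(Q)$ norm at most $\sqrt{2C_1}\,A^{|\alpha|}M_{|\alpha|}\|f\|_{L^2(Q)}$. A Sobolev embedding on $Q$, which costs $d$ extra derivatives and powers of $l$, then gives the pointwise bound
\bes
\sup_Q|\partial^\alpha f|\le C_2\,B^{|\alpha|}M_{|\alpha|+d}\,l^{-d/2}\|f\|_{L^2(Q)}.
\ees
The shifted sequence $M_{n+d}$ still satisfies (\ref{quasi-cond}), since shifting the index only drops finitely many terms of the series. Normalize $\|f\|_{L^2(Q)}=l^{d/2}$ and pick $x_0\in Q$ with $|f(x_0)|\ge 1$. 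Restricting $f$ to segments through $x_0$ produces one-variable functions in a fixed Carleman class $\widetilde C_{\mathcal M'}$. Their constants $B_f,\beta_f$ are independent of $f$ and of the particular good cube, and each takes a value $\ge1$ at $x_0$.

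\emph{Step 3: quantitative Remez inequality (main obstacle).} The key ingredient is a uniform Remez-type inequality for a normalized quasi-analytic class. It should state: if $g\in\widetilde C_{\mathcal M'}(I)$ with fixed constants, $\sup_I|g|\ge 1$, and $F\subset I$ has $|F|\ge \delta|I|$, then $\sup_F|g|\ge \eta(\delta,\mathcal M',B,\beta)>0$. I would first try to import this from Nazarov--Sodin--Volberg's Remez inequality for quasi-analytic Denjoy--Carleman classes. Failing that, I would argue by compactness: the normalized class is compact in $C^0(I)$, and a limit function vanishing on a set of positive measure would vanish to infinite order at a density point, hence identically by Theorem \ref{thm-DC}. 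Such a limit contradicts $\sup_I|g|\ge1$.

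Passing from lines to the cube then follows Kovrijkine's polar-coordinate trick. Since $|E\cap Q|\ge\gamma l^d$, a positive proportion of the segments through $x_0$ meet $E$ in a set of relative measure $\gtrsim\gamma$. The Remez inequality bounds $f$ from below on a subset of $E\cap Q$ of measure $\gtrsim\gamma l^d$. A second application, to the set where $|f|$ is small, converts this sup-type information into the $L^2$ bound $\|f\|_{L^2(Q)}\le C(d,W,C_W,\gamma,l)\|f\|_{L^2(E\cap Q)}$. Summing over good cubes and using Step 2 gives the theorem. The delicate point is the uniformity of $\eta$ in Step 3, since the whole argument rests on the constants being independent of $f$ and of the cube.
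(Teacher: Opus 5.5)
Your proposal is correct and is essentially the Jaye--Mitkovski argument. The paper quotes this theorem without proof, but it transplants the same argument to $G$ in the proofs of Theorem~\ref{thm-main} and Corollary~\ref{thm-main-2}: the same sequence $M_n=\sup_{r\ge 1}r^n/W(r)$, a good/bad decomposition of a cover, a Sobolev embedding on good pieces, restriction to a segment through a point where $|f|$ is large (chosen by polar coordinates), the Nazarov--Sodin--Volberg Remez inequality (Theorem~\ref{thm-NSV}) with Bang degree bounded uniformly via the lower bound on the supremum, and the small-value-set trick to pass from a supremum bound to an $L^2$ bound; only cosmetic repairs are needed, namely rescaling $M_n$ by $W(1)$ so that $M_n\ge 1$ (otherwise $\|\partial^\alpha f\|_{L^2}\le (1+C_W)M_{|\alpha|}$ can fail at $\alpha=0$), and noting that $\sup_n r^n/M_n$ recovers $W$ only up to a factor of order $r$ rather than a constant, which is still harmless for (\ref{decay-weight}).
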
 
	
	\begin{rem}
		\begin{enumerate}
			\item[(i)]  The result is sharp in the sense that,  if the integral (\ref{decay-weight}) is finite,  then the Paley-Wiener theorem implies that there exists compactly supported function supported in an arbitrarily small ball; therefore,  the conclusion of the theorem  fails to hold. 
			
			\item[(ii)] 
			Theorem \ref{thm-JM} differs from Theorem \ref{thm-PLS} in that the functions under consideration are no longer analytic, and thus complex-analytic techniques cannot be employed. Its formulation was inspired by the breakthrough uniqueness theorem of Bourgain and Dyatlov \cite{BD}, where the result was established in one dimension for the weight $W(t) = e^{t/\log^\delta(e+t)}$, with $0<\delta<1$. The key observation underlying their proof is that if $\mathcal F f(\xi)$ decays at infinity significantly faster than $\exp(-\xi/\log|\xi|)$, then the function is quasi-analytic. In the one-dimensional setting, the main tool is the use of harmonic measure on a strip \cite{BD}, while in higher dimensions,  Jaye and Mitkovski \cite{JM} instead employed a Remez-type inequality for quasi-analytic functions, established in \cite{NSV}.
			
			\item[(iii)] For dimension one,  in \cite[Proposition 1.4]{JM},  the authors have explicitly computed the constant: if $E$ is $(\gamma, 1)$-relatively dense set, then for $W(t)=e^{t/\log(e+t)}$,  the constant can be choosen $C=(A/\gamma)^{(\log A C_W)^{e^A}}$, for some absolute constant $A>0$.  
			
			\item[(iv)] A similar quantitative uniqueness result for functions in a general class of Gelfand–Shilov spaces on $\R^d$ was obtained in \cite{M}.
			
		\end{enumerate}
	\end{rem}
	
	From the above remark, it is evident that Theorem \ref{thm-JM} concerns quasi-analytic functions. Since we have already defined a class of quasi-analytic functions on compact Lie groups $G$, it is natural to seek an analogous uniqueness result in this setting. The $Ad$-invariant inner product on $\mathfrak g$ induces a bi-invariant metric, making $G$ a Riemannian manifold. For $g \in G$ and $r > 0$, let $\mathcal B(g,r)$ denote the geodesic ball of radius $r$ centered at $g$. We write $r_{\mathrm{inj}} > 0$ for the injectivity radius of $G$ and $\mathfrak R\in (0, r_{inj})$ for which the integral formula (\ref{vol-est}) is valid.  
	
	\begin{defn} \label{defn-dense} Let $\gamma \in (0, 1]$.  A subset $\Omega \subset G$ is said to be $\gamma$-relatively dense if there exists $r \in (0,  \mathfrak R/2)$ such that  
		\bes
		\mu\left(\mathcal B(g,  r) \cap \Omega \right) \geq \gamma ~\mu\left(\mathcal B(g,  r) \right),  \:\: \text{  for all } \:\: g\in G. 
		\ees 
	\end{defn}
	As our goal is to establish a uniqueness result for a class of quasi-analytic functions with a uniform constant, we refine the definition of the quasi-analytic class. Specifically,  given a quasi-analytic sequence 
	$\mathcal M=\{M_n\}_{n=0}^\infty$ and $p\in [1, \infty]$,  we define $C_{\mathcal M,  p}(G)$ by 
	\bes
	C_{\mathcal M, p}(G)=\left\{f\in C^\infty(G): \|\mathcal L^nf\|_{L^p(G)} \leq M_{2n},  \:\: \forall n \in \N_0 \right\}.
	\ees
	The following is our main uniqueness result on compact,  connected Lie groups $G$.
	\begin{thm} \label{thm-main}
		Let $\gamma\in(0, 1), p\in [1,  \infty]$ and $\mathcal M=\{M_n\}_{n=0}^\infty$ be a non-decreasing quasi-analytic sequence.  There exists an absolute constant $C=C(G)$ such that 
		\bes
		\|f\|_{L^p(G)}\leq  \left( \frac{C \Gamma \left(2n_{\widetilde{\mathcal M},  s} \right)}{\gamma} \right)^{2 n_{\widetilde{\mathcal M},  s}+\frac{1}{p}}   \|f\|_{L^p(\Omega)},  \:\: \textit{ for all } f\in   C_{\mathcal M,  p}(G),
		\ees
		for all  $\gamma$-relatively dense subset $\Omega$ in $G$.  Here,  $\widetilde{\mathcal M}$ and $s$ are defined in (\ref{defn-tilde-M}) and (\ref{defn-s}) respectively.  Also,  the functions $n_{\mathcal M,  s},$ and  $\Gamma$  are defined in (\ref{defn-Gamma}) and (\ref{defn-ns}) respectively.
	\end{thm}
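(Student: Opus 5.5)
The plan follows the Kovrijkine/Jaye--Mitkovski scheme: localisation to geodesic balls, then a Remez-type inequality for quasi-analytic functions applied along geodesics, with constants controlled by the sequence $\widetilde{\mathcal M}$ and the index $n_{\widetilde{\mathcal M},s}$.

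\emph{Step 1: from Casimir powers to derivatives.} The first step converts the Casimir bounds $\|\mathcal L^n f\|_{L^p(G)}\le M_{2n}$ into pointwise bounds on all left-invariant derivatives. Fix an orthonormal basis $X_1,\dots,X_d$ of $\mathfrak g$. Using the Peter--Weyl decomposition, and the fact that $\widetilde{X_{i_1}}\cdots\widetilde{X_{i_k}}$ acts on the isotypic component of $\pi_\lambda$ with operator norm at most $(\|\lambda+\rho\|^2-\|\rho\|^2)^{k/2}$ up to constants, together with Sobolev embedding on $G$, I expect to get
\[
\sup_G |\widetilde{X_{i_1}}\cdots \widetilde{X_{i_k}} f| \le C^{k+1}\,\widetilde M_k .
\]
Here $\widetilde{\mathcal M}$ (the sequence in (\ref{defn-tilde-M})) is a log-convex regularisation of $\{M_{2n}\}$, shifted by a fixed number $s$ of orders to absorb the Sobolev loss $s>d/2$ and the passage from $L^p$ to $L^\infty$. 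For this one interpolates odd orders by log-convexity. This is also essentially the mechanism behind Theorem \ref{thm-cher-G}. Since $\mathcal M$ is quasi-analytic, so is $\widetilde{\mathcal M}$, by the Denjoy--Carleman criterion (Theorem \ref{thm-DC}) and stability of (\ref{quasi-cond}) under finite shifts.

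\emph{Step 2: localisation to good balls.} Cover $G$ by balls $\mathcal B(g_j,r)$, with $r<\mathfrak R/2$ as in Definition \ref{defn-dense}, with bounded overlap depending only on $G$. Call a ball \emph{bad} if for some $k\ge 1$
\[
\sup_{\mathcal B(g_j,2r)}|\nabla^k f| > A^k \widetilde M_k\, \Big(\frac{1}{\mu(\mathcal B(g_j,2r))}\int_{\mathcal B(g_j,2r)}|f|^p\Big)^{1/p}.
\]
In practice one uses an $L^p$-averaged version of this condition, as in Kovrijkine. Summing over $k$ with weights $A^{-kp}$ and using Step 1 in $L^p$ form, the bad balls carry at most half of $\|f\|_p^p$ once $A$ is a large constant depending only on $G$. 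It therefore suffices to prove, on each good ball $B$,
\[
\|f\|_{L^p(B)}\le \Big(\tfrac{C\Gamma(2n)}{\gamma}\Big)^{2n+1/p}\|f\|_{L^p(B\cap\Omega)} .
\]
Summing these over the good balls, with the bounded overlap, then yields the theorem.

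\emph{Step 3: Remez inequality on a good ball (main obstacle).} On a good ball, pick a point $x_0$ where $|f(x_0)|$ is at least a fixed multiple of the $L^p$ average of $|f|$ over the ball. Pull back by $\exp$ to a Euclidean ball, which is legitimate since $2r<\mathfrak R<r_{\rm inj}$ and the volume formula (\ref{vol-est}) compares the measures. The pulled-back function then belongs to a uniform Denjoy--Carleman class $C_{\widetilde{\mathcal M}}$ on lines through $x_0$, with $\sup|f|$ comparable to $|f(x_0)|$. Apply the one-dimensional Remez-type inequality for quasi-analytic classes of Nazarov--Sodin--Volberg \cite{NSV} on each radial segment. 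In that inequality the exponent is governed by the Bang degree $n_{\widetilde{\mathcal M},s}$ (see (\ref{defn-ns})), and the constant by $\Gamma$ from (\ref{defn-Gamma}); it gives
\[
\sup_I|f|\le \Big(\tfrac{\Gamma(2n)}{|E|/|I|}\Big)^{2n}\sup_E|f| .
\]
Integrating in polar coordinates with the Jacobian bounds, and using Chebyshev's inequality to pass from $\sup_E$ to $L^p(E)$ norms (this costs the extra $1/p$ in the exponent), gives the local estimate. Here $E=\Omega\cap B$ has relative measure at least $\gamma$.

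I expect Step 3 to be the main obstacle. It requires making the Bang-degree bound uniform over all good balls, using only the normalisation $\|\mathcal L^n f\|_p\le M_{2n}$ with no constant $B_f$. It also requires handling the non-flat geometry via $\exp$ without losing quasi-analyticity: the derivatives of $f\circ\exp$ must still be bounded in terms of $\widetilde M_k$, via the analyticity of $\exp$ and a Faà di Bruno estimate using log-convexity.
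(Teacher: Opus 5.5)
Your outline follows the paper's proof closely. Lemma \ref{lem-Sobolev} gives $\|\widetilde X^n f\|_{L^\infty}\le \mathfrak C^{n+1}\|\mathcal L^{d+n/2}f\|_{L^1}$, with odd $n$ handled by a geometric mean, so the shift is $2d$. Balls are split into good and bad with threshold $B^n\bar M_n$. On a good ball one picks a point $g_0$ where $|f|$ exceeds its $L^p$-average, a direction $Y$ by polar coordinates, applies Theorem \ref{thm-NSV} to $t\mapsto g_0\exp(tlY)$, and uses a level-set argument for the extra $1/p$.

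Two of your stated obstacles are not real ones:
\begin{itemize}
\item Along a one-parameter subgroup, $\frac{d^n}{dt^n}f(x_0\exp(tY))=(\widetilde Y^n f)(x_0\exp(tY))$ exactly, so no Fa\`a di Bruno estimate is needed.
\item The Bang degree is made uniform over balls by normalising the restriction by the \emph{local} norm $\|f\|_{L^p(\mathcal B)}$. Then $|F_Y(0)|\ge s$, with $s$ as in (\ref{defn-s}), independently of the ball. In the paper $s$ is this threshold, not a Sobolev shift.
\end{itemize}

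The genuine gap is the claim in Step 2 that the bad balls carry at most half of $\|f\|_p^p$ once $A$ depends only on $G$. Step 1 gives \emph{absolute} bounds $\sup|\nabla^k f|\le C^{k+1}\widetilde M_k$. Your argument therefore bounds the mass of the bad balls by an absolute quantity $K\sum_k (C/A)^{kp}$. That is $\le \frac12\|f\|_p^p$ only if $\|f\|_p$ is bounded below. You cannot normalise, because $C_{\mathcal M,p}(G)$ is closed under multiplication by scalars of modulus $\le 1$ but not $>1$. The paper's proof has the same defect (``without loss of generality $\|f\|_{L^p(G)}=1$'').

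For the class as literally defined, the conclusion in fact fails. Take $G=SU(2)$, write $a=g_{11}$, $b=g_{21}$, and let $M_n=n!$. Let $\phi_k$ be the highest-weight matrix coefficient of the $(k+1)$-dimensional representation, so $|\phi_k|=|a|^k$, $\|\phi_k\|_2\le 1$ and $-\mathcal L\phi_k=\lambda_k\phi_k$. Since $\lambda^n\le (2n)!\cosh\sqrt\lambda$, the function $f_k=e^{-\sqrt{\lambda_k}}\phi_k$ lies in $C_{\mathcal M,2}(G)$. Yet $f_k$ concentrates on the torus $\{b=0\}$, so $\|f_k\|_{L^2(G)}/\|f_k\|_{L^2(\Omega)}\to\infty$ for $\Omega=\{|b|\ge\delta\}$. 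This $\Omega$ is $\gamma$-relatively dense for fixed $r$ and small $\delta$.

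The missing ingredient is homogeneity. Assume $\|\mathcal L^n f\|_{L^p(G)}\le M_{2n}\|f\|_{L^p(G)}$, or let the constants depend on a lower bound for $\|f\|_p$. Then Lemma \ref{lem-Sobolev} gives $\sup|\widetilde X^n f|\le \mathfrak C^{n+1}\bar M_n\|f\|_p$. Your bad-ball sum becomes $\le\frac12\|f\|_p^p$ with $A=A(G,p)$, thanks to bounded overlap. This is cleaner than the paper's factor $N$, which depends on $r$. The rest of your argument then goes through.

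Keep your sup-norm good-ball condition. If you switch to an $L^p(\mathcal B)$-averaged one, you need a $d$-dimensional Sobolev embedding on the ball to get pointwise control along the segment. An $L^p(\mathcal B)$ bound does not control the $L^p$ norm on a curve, and the paper's passage through (\ref{Sobolev-emb}) glosses over exactly this.
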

	
	\begin{rem}
		The above estimates depend solely on 
		$\gamma$,  and are independent of the position or shape of the relatively dense set $\Omega$.
	\end{rem}
	As a consequence of Theorem \ref{thm-main}, we derive the following analogue of Theorem \ref{thm-JM} in the setting of $G$.  For the notation used, we refer the reader to Section 2.
	\begin{cor} \label{thm-main-2}
		Suppose a weight $W :[0, \infty) \ra [0, \infty]$ satisfies the hypothesis $(i)$,  $(ii)$ and $(iii)$ of Theorem \ref{thm-JM}.  Then,  for every $\gamma\in (0, 1]$ and $C_W>0$, there exists a finite constant $C=C(\gamma, C_W,  G)>0$ such that, whenever a function $f\in L^2(G)$ satisfies 
		\be \label{weight-G}
		\sum_{[\xi] \in \widehat G}d_\xi ~W(|\xi|)^2~ \|\widehat f(\xi)\|_{HS}^2 \leq C_W^2 \|f\|_{L^2(G)}^2,
		\ee
		and $\Omega$ is a $\gamma$-relatively dense set,  we have 
		\bes\|f\|_{L^2(G)}\leq C\|f\|_{L^2(\Omega)}.
		\ees
	\end{cor}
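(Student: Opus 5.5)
Corollary \ref{thm-main-2} should follow from Theorem \ref{thm-main} with $p=2$. Since both sides of the desired inequality are homogeneous of degree one in $f$, we may normalize $\|f\|_{L^2(G)}=1$. The job is then to build from $W$ a single non-decreasing quasi-analytic sequence $\mathcal M=\{M_n\}$, depending only on $W$ and $C_W$, such that every $f$ satisfying (\ref{weight-G}) with $\|f\|_{L^2(G)}=1$ lies in $C_{\mathcal M,2}(G)$. Once this is done, Theorem \ref{thm-main} gives $\|f\|_{L^2(G)}\le C\|f\|_{L^2(\Omega)}$ with
\[
C=\left(C(G)\,\Gamma(2n_{\widetilde{\mathcal M},s})/\gamma\right)^{2n_{\widetilde{\mathcal M},s}+1/2}.
\]
This constant depends only on $\gamma$, $G$ and $\mathcal M$, hence only on $\gamma$, $C_W$ and $G$.

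To construct $\mathcal M$, I would use Plancherel. Since $\mathcal L$ acts on the $\xi$-isotypic component by the scalar $-|\xi|^2$ (up to the normalization $|\xi|^2=\langle \xi+2\rho,\xi\rangle$ or similar used in Section 2), we have
\[
\|\mathcal L^n f\|_2^2=\sum_{[\xi]} d_\xi\,|\xi|^{4n}\,\|\widehat f(\xi)\|_{HS}^2 .
\]
For $W(|\xi|)\ge 1$, bound $|\xi|^{4n}\le \big(\sup_{r\ge0} r^{2n}/W(r)\big)^2 W(|\xi|)^2$. Combined with (\ref{weight-G}), this gives
\[
\|\mathcal L^n f\|_2\le C_W\sup_{r\ge 0}\frac{r^{2n}}{W(r)}.
\]
(The case $n=0$ is just the normalization, and the terms with $|\xi|$ small are harmless.) Accordingly I set
\[
M_{k}:=\max(1,C_W)\,\sup_{r\ge0}\frac{r^{k}}{W(r)},
\]
and add a harmless factor if needed so that the sequence is non-decreasing. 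This is the classical associated sequence of the weight $W$: it is log-convex automatically, being a supremum of log-linear functions of $k$.

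The main obstacle is checking that this $\mathcal M$ is quasi-analytic, i.e.\ that $\sum_k M_k^{-1/k}=\infty$ follows from (\ref{decay-weight}). This is the classical equivalence between the log-integral condition and the Denjoy–Carleman condition; it uses hypothesis (ii), convexity of $\log W$ in $\log r$. Under (ii), $W(r)=\sup_k r^k/M_k$ up to constants (the associated function), and then the Ostrowski/Mandelbrojt theorem gives
\[
\int^\infty \frac{\log T(r)}{r^2}\,dr=\infty \iff \sum_k \frac{M_{k-1}}{M_k}=\infty .
\]
I would cite this (as Jaye–Mitkovski do, or Koosis) rather than reprove it. The only care needed is that $M_k$ are finite; this holds because $W\to\infty$ with $\log W$ convex in $\log r$, so $W$ grows at least polynomially of every order, provided $W$ is not too slow. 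If $W$ grows only polynomially, then the sequence is finite only for small $k$, and in that case the log-integral condition fails anyway, so hypothesis (iii) rules it out. A further small point is that Theorem \ref{thm-main} needs the bound for the even indices $M_{2n}$, which our definition gives directly.
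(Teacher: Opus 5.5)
Your proposal is correct and follows essentially the same route as the paper: normalize $\|f\|_{L^2(G)}=1$, set $M_n=\sup_{\lambda\ge 0}\lambda^n/W(\lambda)$, cite Jaye--Mitkovski (their Prop.~2.2) for quasi-analyticity, use Plancherel to get $\|\mathcal L^n f\|_{L^2(G)}\le C_W M_{2n}$, and apply Theorem~\ref{thm-main} to $\{C_W M_n\}$. The differences are cosmetic. The paper gets monotonicity by assuming without loss of generality that $W\equiv 1$ on $[0,1]$, so that the supremum can be taken over $\lambda\ge 1$, whereas you insert an unspecified harmless factor. Also, $C_W\ge 1$ holds automatically because $W\ge 1$, so your $\max(1,C_W)$ is unnecessary.
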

	Theorem \ref{thm-main} further implies the following spectral inequality on $G$.  For $\Lambda>0$,  let $P_\Lambda= \chi_{[0,  \Lambda]}(\mathcal L)$ denote the spectral projection up to energy $\Lambda>0$ for the operator $\mathcal L$; see (\ref{defn-PLambda}) for the definition.
	\begin{cor} \label{thm-spectral}
		Let $\gamma \in (0, 1]$ and $\Lambda>0$.  There exists $C=C(G)>0$ such that for every $\gamma$-relatively dense set $\Omega$
		\bes
		\|P_\Lambda f\|_{L^2(G)}\leq  \left( \frac{C}{\gamma} \right)^{2e \sqrt{\Lambda}+1/2}   \|P_\Lambda f\|_{L^2(\Omega)},  \:\: \textit{ for all } f\in   L^2(G).
		\ees 
	\end{cor}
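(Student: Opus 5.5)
We derive Corollary \ref{thm-spectral} from Theorem \ref{thm-main} with $p=2$, by exhibiting an explicit quasi-analytic sequence $\mathcal M$ for which the normalized projection $P_\Lambda f/\|P_\Lambda f\|_{L^2(G)}$ lies in $C_{\mathcal M,2}(G)$. We may assume $P_\Lambda f\neq 0$ and, by homogeneity, $\|P_\Lambda f\|_{L^2(G)}=1$. Set $F=P_\Lambda f$.

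\textbf{Step 1: Casimir powers of $F$.} By the Peter--Weyl expansion, $F$ involves only representations $[\xi]$ with Casimir eigenvalue $\lambda_\xi\le\Lambda$. Plancherel then gives
\bes
\|\mathcal L^n F\|_{L^2(G)}\le \Lambda^n\|F\|_{L^2(G)}=\Lambda^n .
\ees
We therefore take $M_{2n}=\Lambda^n$, that is $M_k=\Lambda^{k/2}=(\sqrt\Lambda)^k$. This sequence is log-convex (with equality), non-decreasing provided $\Lambda\ge1$, and quasi-analytic, since $\sum M_k^{-1/k}=\sum \Lambda^{-1/2}=\infty$. If $\Lambda<1$, we replace $\Lambda$ by $\max(\Lambda,1)$ throughout, or more simply use $M_k=\max(1,\sqrt\Lambda)^k$. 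In either case $F\in C_{\mathcal M,2}(G)$, and Theorem \ref{thm-main} yields
\bes
\|F\|_{L^2(G)}\le \left(\frac{C\,\Gamma(2n_{\widetilde{\mathcal M},s})}{\gamma}\right)^{2n_{\widetilde{\mathcal M},s}+\frac12}\|F\|_{L^2(\Omega)} .
\ees

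\textbf{Step 2: computing the auxiliary quantities.} The main work, and the main obstacle, is to evaluate $\widetilde{\mathcal M}$, $s$, $n_{\widetilde{\mathcal M},s}$ and $\Gamma$ from their definitions (\ref{defn-tilde-M}), (\ref{defn-s}), (\ref{defn-ns}), (\ref{defn-Gamma}) for a geometric sequence $M_k=a^k$ with $a=\sqrt\Lambda$. We expect $\widetilde{\mathcal M}$ to remain essentially geometric, perhaps up to factors $k^{k}$ or constants coming from Sobolev/elliptic estimates, which would be absorbed into $C(G)$. For $n_{\mathcal M,s}$, which is typically defined as the smallest $n$ with $\sum_{k\le n}M_{k-1}/M_k\ge s$ (or a similar threshold involving $s\sim r$), the geometric form gives $\sum_{k\le n} a^{-1}=n/a$. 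Hence $n_{\widetilde{\mathcal M},s}\approx s\,a$, and choosing the scale so that $s\le e$ should give $2n_{\widetilde{\mathcal M},s}\le 2e\sqrt\Lambda$. The quantity $\Gamma(\cdot)$ is typically a Bang-type ratio, such as $M_{n}^{1/n}$ times the radius or a sup of ratios $M_{k}/M_{k-1}$ normalized. For a geometric sequence it should reduce to an absolute constant times $a\cdot(\text{radius})\lesssim$ const, and this constant can again be absorbed into $C(G)$.

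\textbf{Step 3: conclusion.} Combining the two steps, and enlarging $C$ if necessary so that $C\ge1$, we obtain the bound $(C/\gamma)^{2e\sqrt\Lambda+1/2}$. Here we use the monotonicity of $x\mapsto (C/\gamma)^x$ for $C/\gamma\ge1$ to replace the exponent $2n_{\widetilde{\mathcal M},s}+\frac12$ by the upper bound $2e\sqrt\Lambda+\frac12$. Undoing the normalization gives the stated inequality. The delicate point is verifying that $\Gamma(2n)$ stays bounded independently of $\Lambda$ for geometric sequences; if it does not, we would rescale the metric or use the freedom in $r$ from Definition \ref{defn-dense} to make it so.
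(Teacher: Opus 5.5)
Your Step 1 and your decision to apply Theorem \ref{thm-main} with $p=2$ and $M_k=\Lambda^{k/2}$ are exactly the paper's argument. Your $\max(1,\sqrt\Lambda)$ device for monotonicity even handles a point the paper skips.

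The gap is Step 2, which is the only remaining content of the corollary. You do not evaluate $\widetilde{\mathcal M}$, $s$, $n_{\widetilde{\mathcal M},s}$ and $\Gamma$ from (\ref{defn-tilde-M}), (\ref{defn-s}), (\ref{defn-Gamma}), (\ref{defn-ns}); you guess their form, and the guesses are wrong.
\begin{itemize}
\item With $M_k=a^k$ ($a=\sqrt\Lambda$, or $\max(1,\sqrt\Lambda)$ in your variant), (\ref{defn-tilde-M}) gives exactly $\widetilde M_n=(2B^{1/p}a)^n$, with no extra $k^k$-type factors.
\item $\Gamma$ is not a Bang-type ratio involving the radius. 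It is $\Gamma(n)=4e^{4+4\gamma_{\mathcal N}(n)}$ with $\gamma_{\mathcal N}(n)=\sup_{1\le l\le n} l\,(N_{l+1}N_{l-1}/N_l^2-1)$, and this vanishes for every geometric sequence. Hence $\Gamma\equiv 4e^4$, and the ``delicate point'' you flag is trivial. Your fallback would not have been legitimate anyway: rescaling the metric rescales $\mathcal L$ and hence $\Lambda$, and $r$ is dictated by $\Omega$.
\item $n_{\mathcal N,s}$ is not the smallest $n$ with $\sum_{k\le n}N_{k-1}/N_k\ge s$. It is the largest $k$ with $\sum_{-\log s<n\le k}N_{n-1}/N_n<e$. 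The $e$ in the exponent comes from this threshold, not from ``arranging $s\le e$''; by (\ref{defn-s}), $s\le 1$ always.
\end{itemize}

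Now do the computation correctly with $p=2$. The sum has constant terms $1/(2eB^{1/2}a)\cdot e$-threshold structure, so $n_{\widetilde{\mathcal M},s}$ lies between $2eB^{1/2}a-1$ and $\log(1/s)+2eB^{1/2}a$. By (\ref{defn-s}), $s$ depends on $\Lambda$ through $\sqrt{M_{2d}M_{2d+2}}=a^{2d+1}$, so $\log(1/s)=O(1+\log a)$.

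The exponent $2n_{\widetilde{\mathcal M},s}+\tfrac12$ delivered by Theorem \ref{thm-main} is therefore at least $4eB^{1/2}a-\tfrac32$. The bad-ball step forces $B\ge 2N\mathfrak C^{2p}>2$ through the $n=1$ term of $N\sum_n\mathfrak C^{p(n+1)}/B^n\le\tfrac12$, so this coefficient exceeds $2e$. Since $\gamma\in(0,1]$ is arbitrary, the excess factor $(1/\gamma)^{(4eB^{1/2}-2e)\sqrt\Lambda}$ cannot be absorbed into the base $C/\gamma$. Your Step 3, which relies on monotonicity in the exponent, therefore cannot reach the stated exponent $2e\sqrt\Lambda+\tfrac12$.

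Be aware that the paper's own verification has the same weakness. It simply asserts $n_{\widetilde{\mathcal M},s}\le e\sqrt\Lambda$, and the lower bound above contradicts this except for small $\Lambda$. What this route honestly yields is $\|P_\Lambda f\|_{L^2(G)}\le (C/\gamma)^{C'\sqrt\Lambda+C''}\|P_\Lambda f\|_{L^2(\Omega)}$, with $C',C''$ determined by the data fixed in the proof of Theorem \ref{thm-main}. The sharp coefficient $2e$ would require an argument beyond Theorem \ref{thm-main} as stated.
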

	As an application of the preceding spectral inequality,  we derive the following observability theorem on $G$.  For the proof,  we refer the reader,  for example,  to \cite[Lemma 2.3]{R}.
	\begin{cor}
		Let $\Omega \subset G$ be a $\gamma$-relatively dense set.  For any solution $u$ of the heat equation 
		\bes
		\partial_{t}u(t, g) -\mathcal{L} u(t, g)=0 \; in \; (0,+\infty)\times G, \; u(0,.) = f \in L^2(G),
		\ees
		and for any $T>0$, there exists a positive constant $C_T$ such that
		\bes
		\int_{G} |u(T,g)|^2~d\mu(g) \leq C_{T} \int_{0}^{T} \int_{\Omega}|u(t,g)|^2 ~d\mu(g) dt.
		\ees
	\end{cor}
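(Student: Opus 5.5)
The observability estimate will follow from the spectral inequality of Corollary \ref{thm-spectral} by the Lebeau--Robbiano / Miller strategy, as in \cite[Lemma 2.3]{R}. I use the convention that $\mathcal L$ is the nonnegative Casimir operator, so that $\chi_{[0,\Lambda]}(\mathcal L)$ makes sense. The heat semigroup then acts diagonally on the Peter--Weyl decomposition: if $\lambda_\xi\geq 0$ is the eigenvalue of $\mathcal L$ on the $\xi$-isotypic component, then $\widehat{u(t)}(\xi)=e^{-t\lambda_\xi}\widehat f(\xi)$. In particular $P_\Lambda$ commutes with the semigroup, $\|u(t)\|_{L^2}$ is non-increasing in $t$, and the high-frequency part decays: $\|(I-P_\Lambda)u(t)\|_{L^2}\leq e^{-\Lambda t}\|(I-P_\Lambda) f\|_{L^2}$.

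The first step is an approximate observability estimate on a subinterval. Fix $\Lambda>0$ and an interval $[t_1,t_2]\subset(0,T]$. For each $t$ in this interval, apply Corollary \ref{thm-spectral} to $u(t)$, which gives
\[
\|P_\Lambda u(t)\|_{L^2(G)}\leq (C/\gamma)^{2e\sqrt\Lambda+1/2}\bigl(\|u(t)\|_{L^2(\Omega)}+\|(I-P_\Lambda)u(t)\|_{L^2(G)}\bigr).
\]
I combine this with the high-frequency decay and the monotonicity of $t\mapsto\|u(t)\|_{L^2}$, and integrate in $t$ over $[t_1,t_2]$. This yields
\[
\|u(t_2)\|^2\leq \frac{A e^{2a\sqrt\Lambda}}{t_2-t_1}\int_{t_1}^{t_2}\|u(t)\|^2_{L^2(\Omega)}\,dt+A e^{2a\sqrt\Lambda-2\Lambda(t_2-t_1)}\|u(t_1)\|^2,
\]
with $a=2e\log(C/\gamma)$ and $A=(C/\gamma)^{O(1)}$. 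The key structural fact is that the spectral cost grows like $e^{a\sqrt\Lambda}$, which is sublinear in $\Lambda$, so it is beaten by the dissipation factor $e^{-\Lambda(t_2-t_1)}$.

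The second step is Miller's telescoping argument. I choose a dyadic partition $T=\tau_0>\tau_1>\dots\to0$ with $\tau_k-\tau_{k+1}=c\,2^{-k}T$, and frequency cutoffs $\Lambda_k=\beta 4^k$. On each piece the spectral cost is of order $e^{2a\sqrt{\beta}2^k}$, while the error term gains $e^{-2\beta c 2^k T}$. Taking $\beta$ large relative to $a^2/T^2$ makes the error coefficient $\varepsilon_k$ satisfy $\varepsilon_k\ll 1/\text{(cost}_{k+1})$. Multiplying the step-$k$ inequality by suitable weights and summing, the terms $\|u(\tau_k)\|^2$ telescope. Since $\|u(\tau_k)\|\leq\|f\|$, the boundary terms vanish as $k\to\infty$. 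What remains is $\|u(T)\|^2\leq C_T\int_0^T\|u(t)\|^2_{L^2(\Omega)}\,dt$, with $C_T\sim \exp(c'a^2/T)$.

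The main obstacle is the bookkeeping in this telescoping choice: balancing $\beta$ against the cost exponent $a$ so that the geometric series converge. This works precisely because the exponent in Corollary \ref{thm-spectral} is $O(\sqrt\Lambda)$ rather than $O(\Lambda)$. Everything else is a routine Plancherel computation on $G$.
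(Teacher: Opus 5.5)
This is correct and is essentially the paper's approach. The paper proves the corollary only by invoking \cite[Lemma 2.3]{R}, the abstract Lebeau--Robbiano/Miller lemma that turns a spectral inequality with cost $e^{O(\sqrt{\Lambda})}$ (here Corollary~\ref{thm-spectral}) into final-time observability, and you are reproving that lemma. Two small repairs: first, integrating over all of $[t_1,t_2]$ gives only a polynomial gain $1/(\Lambda(t_2-t_1))$ in the error term, because the high-frequency part has not yet decayed near $t_1$; integrate over $[(t_1+t_2)/2,t_2]$ instead, which gives the factor $e^{2a\sqrt{\Lambda}-\Lambda(t_2-t_1)}$ and is enough for the telescoping. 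Second, with the paper's $\mathcal L=\sum_i X_i^2\le 0$ the solution is $u(t)=e^{t\mathcal L}f$, so declaring $\mathcal L$ nonnegative would literally make the stated equation backward, although your formula $\widehat{u(t)}(\xi)=e^{-t\lambda_\xi}\widehat f(\xi)$ is the right one.
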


	We now consider the homogeneous spaces $G/H$ of $G$, where $H$ is any closed subgroup of $G$.   For $x\in G/H$ and $r>0$, let $\widetilde{\mathcal B}(x, r)$ denote the geodesic ball of radius $r$ centred at $x\in G/H$.  Let $\mathfrak r$ be a positive number for which the  volume form has the estimate (\ref{vol-est-GH}) for all $0< r< \mathfrak r$ (see Section 4).  Following Definition \ref{defn-dense},  we define a relatively dense set in $G/H$ as follows: a subset $\widetilde{\Omega} \subset G/H$ is said to be $\gamma$-relatively dense if there exists $r\in (0, \mathfrak r/2)$ such that  
	\bes
	\widetilde{\mu}\left(\widetilde{\mathcal B}(x,  r) \cap \widetilde{\Omega} \right) \geq \gamma ~\widetilde{\mu}\left(\widetilde{\mathcal B}(x,  r) \right),  \:\: \text{  for all } \:\: x\in G/H. 
	\ees 
	We define the class of quasi-analytic functions on $G/H$ associated with a quasi-analytic sequence $\mathcal M=\{M_n\}_{n=0}^\infty$ by
	\bes
	C_{\mathcal M, p}(G/H)=\left\{f\in C^\infty(G/H): \| \tilde{\mathcal L}^n f\|_{L^p(G/H)} \leq M_{2n},  \:\: \forall n \in \N_0 \right\}.
	\ees
	We now state our main uniqueness result on $G/H$.
	\begin{thm} \label{thm-X}
		Let $\gamma\in(0, 1] ,  p \in [1, \infty]$ and $\mathcal M=\{M_n\}_{n=0}^\infty$ be a non-decreasing quasi-analytic sequence.  There exists $C=C(G/H)$ such that 
		\bes
		\|f\|_{L^p(G/H)}\leq 
		\left( \frac{C \Gamma \left(2n_{\widetilde{\mathcal M},  s} \right)}{\gamma} \right)^{2 n_{\widetilde{\mathcal M}, s}+\frac{1}{p}} ~\|f\|_{L^p(\widetilde{\Omega})},  \:\: \textit{ for all } f\in   C_{\mathcal M, p}(G/H),
		\ees
		for all  $\gamma$-relatively dense subset $\widetilde{\Omega}$ in $G/H$.
	\end{thm}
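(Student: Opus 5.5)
The plan is to reduce Theorem \ref{thm-X} to Theorem \ref{thm-main} by lifting functions from $G/H$ to $G$. Given $f\in C_{\mathcal M,p}(G/H)$, set $F=f\circ\pi$, a right $H$-invariant smooth function on $G$. The first step is to compare $\mathcal L$ on $G$ with $\widetilde{\mathcal L}$ on $G/H$. Take an orthonormal basis $\{X_1,\dots,X_k\}$ of $\mathfrak q$ and $\{Y_1,\dots,Y_m\}$ of $\mathfrak h$. Then $\mathcal L=\sum_i \widetilde{X_i}^2+\sum_j \widetilde{Y_j}^2$. On right $H$-invariant functions the $\mathfrak h$-part acts as zero, since $\widetilde Y F(g)=\frac{d}{dt}F(g\exp tY)|_{t=0}=0$.

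The next point to check is that $\sum_i \widetilde{X_i}^2$ acting on right $H$-invariant functions coincides with the lift of the Laplace--Beltrami operator $\widetilde{\mathcal L}$ of the normal metric on $G/H$. This is the standard fact that $\pi:G\to G/H$ is a Riemannian submersion with totally geodesic fibres for the normal homogeneous metric. Equivalently, in the compact case, the Casimir of $G$ descends to the Laplacian of the normal metric (Berger--Bergery--Bourguignon). I expect this to be the main thing to verify carefully, and if needed I would cite the paper's earlier identification of $\widetilde{\mathcal L}$. Granting it, $(\widetilde{\mathcal L}^n f)\circ\pi=\mathcal L^n F$ for all $n$.

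Since $\widetilde\mu$ is the pushforward of $\mu$, $\|\mathcal L^n F\|_{L^p(G)}=\|\widetilde{\mathcal L}^n f\|_{L^p(G/H)}\le M_{2n}$. Hence $F\in C_{\mathcal M,p}(G)$.

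Now let $\widetilde\Omega$ be $\gamma$-relatively dense in $G/H$, with radius $r<\mathfrak r/2$, and set $\Omega=\pi^{-1}(\widetilde\Omega)$. Then $\|F\|_{L^p(\Omega)}=\|f\|_{L^p(\widetilde\Omega)}$ and $\|F\|_{L^p(G)}=\|f\|_{L^p(G/H)}$. It therefore suffices to show that $\Omega$ is $\gamma'$-relatively dense in $G$ with $\gamma'\ge c\gamma$, where $c=c(G/H)$; the factor $c$ is absorbed into the constant $C$. This density transfer is the real obstacle, because $\pi(\mathcal B(g,r))=\widetilde{\mathcal B}(\pi g,r)$ but $\pi^{-1}$ of a small ball is a tube around a fibre, not a ball.

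For this step I would not use balls in $G$ directly. Instead I would rerun the proof of Theorem \ref{thm-main} with $G$ replaced by $G/H$: localise on geodesic balls $\widetilde{\mathcal B}(x,r)$ using the volume estimate (\ref{vol-est-GH}), which is the analogue of (\ref{vol-est}). The Remez-type inequality for quasi-analytic functions is applied in exponential coordinates $\mathfrak q\to G/H$, $X\mapsto \exp(X)\cdot o$, near each point. The Sobolev/Bernstein control of derivatives in those coordinates comes from powers of $\widetilde{\mathcal L}$ through the lift $F$, using the elliptic estimates on $G$ already established. The constants $\widetilde{\mathcal M}$, $s$, $n_{\widetilde{\mathcal M},s}$ and $\Gamma$ then appear exactly as before. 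The only changes are the dimension $\dim G/H$ and the volume constants, which go into $C=C(G/H)$.

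So the final structure is as follows. First, lift $f$ to $F$ to obtain the derivative bounds via the Casimir identity. Second, run the good/bad ball decomposition of the proof of Theorem \ref{thm-main} on $G/H$. Third, apply the Remez inequality on good balls. Fourth, sum over a bounded-overlap cover of $G/H$ by balls of radius $r$, which produces the stated bound with exponent $2n_{\widetilde{\mathcal M},s}+1/p$.
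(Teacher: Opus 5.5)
Your final plan is the paper's proof. The paper likewise never transfers density to $\pi^{-1}(\widetilde\Omega)\subset G$. It uses the lift $f'=f\circ\pi$ and $\mathcal L f'=\widetilde{\mathcal L}f$ only to feed Lemma~\ref{lem-Sobolev} into a good/bad-ball decomposition of $G/H$, where a ball is good if $\int_{\pi^{-1}(\widetilde{\mathcal B})}|\widetilde X^n f'|^p\,d\mu\le B^n\bar M_n^p\int_{\pi^{-1}(\widetilde{\mathcal B})}|f'|^p\,d\mu$ for all $X\in\mathcal S^{l-1}\subset\mathfrak q$. It then applies the Nazarov--Sodin--Volberg inequality along $t\mapsto g_0\exp(t\kappa Y)H$ with $Y\in\mathfrak q$, using \eqref{vol-est-GH} in place of \eqref{vol-est}.

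When you rerun it, do not copy the step that turns the good-ball bounds into $L^\infty$ bounds for $F_Y^{(n)}$. The paper applies a one-dimensional Sobolev embedding on $[0,1]$ and then bounds the $L^p$ norm of $\widetilde Y^n f'$ along that single curve by its $L^p$ norm over the tube $\pi^{-1}(\widetilde{\mathcal B})$. That bound fails for general functions when $p<\infty$. A correct version needs a genuinely $d$-dimensional Sobolev embedding on the tube, which means putting mixed derivatives of order up to $n+d$ into the good-ball condition.
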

	
	For $\Lambda>0$,  let $\widetilde{\mathcal P}_\Lambda= \chi_{[0,  \Lambda]}(\widetilde{\mathcal L})$ denote the spectral projection up to energy $\Lambda>0$ for the operator $\widetilde{\mathcal L}$. 
	As a consequence of Theorem \ref{thm-X}, we obtain the following spectral estimates, which extend the result of \cite{DV} on the sphere to the more general setting of homogeneous spaces.
	\begin{cor}
		Let $\gamma \in (0, 1]$ and $\Lambda\geq 0$. There exists $C=C(G/H)$ such that for all $\gamma$-relatively dense sets $\widetilde{\Omega}\subset G/H$,  we have
		\bes
		\|\widetilde{\mathcal P}_\Lambda f\|_{L^2(G/H)}\leq \left(\frac{C}{\gamma}\right)^{2e \sqrt{\Lambda}+1/2} \|\widetilde{\mathcal P}_\Lambda f\|_{L^2(\widetilde{\Omega})}, \:\: \textit{ for all } \:\: f\in L^2(G/H).
		\ees
	\end{cor}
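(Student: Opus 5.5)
This corollary can be derived from Theorem \ref{thm-X}, as the corresponding spectral inequality on $G$ is derived from Theorem \ref{thm-main}. Fix $f\in L^2(G/H)$ and put $u=\widetilde{\mathcal P}_\Lambda f$. If $u=0$ there is nothing to prove. Otherwise we may normalise $\|u\|_{L^2(G/H)}=1$, since both sides of the inequality are homogeneous of degree one. Because $G/H$ is compact, $\widetilde{\mathcal L}$ has discrete spectrum with finite-dimensional eigenspaces. So $u$ is a finite sum of eigenfunctions of $\widetilde{\mathcal L}$ with eigenvalues in $[0,\Lambda]$, and in particular $u$ is smooth. By the spectral theorem,
\bes
\|\widetilde{\mathcal L}^n u\|_{L^2(G/H)}\leq \Lambda^n \|u\|_{L^2(G/H)}=\Lambda^n, \quad n\in\N_0 .
\ees
Thus $u\in C_{\mathcal M,2}(G/H)$ for the sequence $M_n=\Lambda^{n/2}$, adjusted so that it is non-decreasing when $\Lambda<1$, for instance $M_n=\max(1,\Lambda)^{n/2}$. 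This sequence is log-convex, and it is quasi-analytic because $M_n^{-1/n}$ is a positive constant, so $\sum_n M_n^{-1/n}=\infty$.

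Next we apply Theorem \ref{thm-X} with $p=2$. This gives
\bes
\|u\|_{L^2(G/H)}\leq\left(\frac{C\,\Gamma(2n_{\widetilde{\mathcal M},s})}{\gamma}\right)^{2n_{\widetilde{\mathcal M},s}+1/2}\|u\|_{L^2(\widetilde\Omega)}.
\ees
The remaining work is to evaluate $\widetilde{\mathcal M}$, $s$, $n_{\widetilde{\mathcal M},s}$ and $\Gamma$, defined in (\ref{defn-tilde-M}), (\ref{defn-s}), (\ref{defn-ns}) and (\ref{defn-Gamma}), for this geometric sequence. We need to show two things. First, $n_{\widetilde{\mathcal M},s}\le e\sqrt{\Lambda}$, up to an additive constant that can be absorbed into $C$. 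Second, $\Gamma(2n_{\widetilde{\mathcal M},s})$ is bounded by a constant depending only on $G/H$, which can also be absorbed into $C$.

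Heuristically, $n_{\mathcal M,s}$ in Nazarov--Sodin--Volberg-type Remez inequalities is the first index $n$ at which the Bang-type sum $\sum_{j\le n} M_{j-1}/M_j$ exceeds a threshold $s$ comparable to the scale $r<\mathfrak r/2$. For $M_j=\Lambda^{j/2}$ each term equals $\Lambda^{-1/2}$, so $n\approx s\sqrt\Lambda$. The factor $e$ should come from the normalisation $\widetilde{\mathcal M}$, which presumably involves $M_n/n!$ or $(M_n)^{1/n}$ through Stirling's formula. $\Gamma$ is a ratio of consecutive terms of $\widetilde{\mathcal M}$, or a similar quantity, and should be bounded for a geometric sequence. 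We expect the main obstacle to be this bookkeeping: verifying from the precise definitions in Section 2 that the constant in the exponent is exactly $2e\sqrt\Lambda$, and that $\Gamma$ is uniformly bounded even when $\Lambda$ is small. The small-$\Lambda$ case is handled by the modification $\max(1,\Lambda)$ together with the observation that for $\Lambda<1$ only finitely many low eigenvalues occur, so the exponent stays bounded and is absorbed into $C$.

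Finally, undoing the normalisation gives the stated inequality for every $f\in L^2(G/H)$. The constant $C$ depends only on $G/H$, through the constant in Theorem \ref{thm-X} and the volume estimate (\ref{vol-est-GH}).
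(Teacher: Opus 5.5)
Your reduction matches the paper's route: the $G/H$ corollary is obtained exactly like Corollary~\ref{thm-spectral}. The bound $\|\widetilde{\mathcal L}^n u\|_{L^2}\le\Lambda^n\|u\|_{L^2}$ puts $u$ in the class of a geometric sequence, and Theorem~\ref{thm-X} is applied with $p=2$. Replacing $\Lambda^{n/2}$ by $\max(1,\Lambda)^{n/2}$ is a sensible fix that the paper omits, since Theorem~\ref{thm-X} assumes $\mathcal M$ non-decreasing.

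The gap is that the corollary's only content beyond Theorem~\ref{thm-X} is the explicit exponent, and you leave exactly this as an expectation. The heuristics you offer also misread the definitions:
\begin{itemize}
\item In (\ref{defn-Gamma}) the threshold is $e$, and $s$ enters only through the starting index $-\log s$ of the sum. So the $e$ in $2e\sqrt\Lambda$ is that threshold and has nothing to do with Stirling.
\item By (\ref{defn-ns}), $\Gamma(n)=4e^{4+4\gamma_{\mathcal N}(n)}$, where $\gamma_{\mathcal N}$ measures the defect $N_{l+1}N_{l-1}/N_l^2-1$ from equality in log-convexity. It is not a ratio of consecutive terms.
\end{itemize}
With the correct definitions the check is short. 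By (\ref{defn-tilde-M}), $M_n=\Lambda^{n/2}$ gives $\widetilde M_n=(2B^{1/2})^n\Lambda^{n/2}$, which is exactly geometric. Hence $\gamma_{\widetilde{\mathcal M}}\equiv0$ and $\Gamma\equiv4e^4$. Every term in the sum defining $n_{\widetilde{\mathcal M},s}$ equals $(2\sqrt{B\Lambda})^{-1}$, so $n_{\widetilde{\mathcal M},s}\le\lfloor-\log s\rfloor+2e\sqrt{B\Lambda}$. This is the verification the paper records, stated there as $n_{\widetilde{\mathcal M},s}\le e\sqrt\Lambda$ and $\Gamma=4e^4$.

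Your plan to absorb additive errors into $C$ also fails as stated. In $(C\Gamma/\gamma)^{2n_{\widetilde{\mathcal M},s}+1/2}$ the constant shares its base with $\gamma^{-1}$. An extra $a$ in $n_{\widetilde{\mathcal M},s}$ therefore costs a factor $(C\Gamma/\gamma)^{2a}$, which is not bounded uniformly in $\gamma\in(0,1]$. The same objection applies to your bounded exponent for $\Lambda<1$. Moreover, the shift $\lfloor-\log s\rfloor$ is not even constant: by (\ref{defn-s}) it contains $\log\sqrt{M_{2d}M_{2d+2}}=(d+\tfrac12)\log\Lambda$. The factor $2\sqrt B$ multiplying $e\sqrt\Lambda$ cannot be moved into $C$ either. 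Reaching exactly $(C/\gamma)^{2e\sqrt\Lambda+1/2}$ therefore requires disposing of both, which the paper's one-line check also passes over.

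For $\Lambda$ below the first nonzero eigenvalue you do not need Theorem~\ref{thm-X} at all. There $u$ is constant, since $G/H$ is connected. Averaging the density condition over $G/H$, using that all balls of radius $r$ have the same measure, gives $\widetilde\mu(\widetilde\Omega)\ge\gamma$. Hence $\|u\|_{L^2(G/H)}\le\gamma^{-1/2}\|u\|_{L^2(\widetilde\Omega)}$.
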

	
	\section{Preliminaries}
	
	In this section,  we provide a brief review of the necessary preliminaries and Fourier analysis on compact Lie groups.  We adopt the standard notation and refer to \cite{H1,  H2,  RT-B} for further details.
	
	Let $G$ be a compact, connected Lie group of dimension $d$ and let $\mathfrak g$ denote its Lie algebra,  identified with the tangent space $T_e(G)$ at the identity element $e$ in $G$.  Let $Ad$ denote the adjoint action of $G$ on $\mathfrak g$.  Since $G$ is compact,  there exists an $Ad$-invariant inner product $\langle \cdot, \cdot \rangle$ on $\mathfrak g$.  Let $\{X_1, \cdots,  X_d\}$ be an orthonormal basis of $\mathfrak g$ with respect to this inner product.   The Casimir operator $\mathcal L$ is then defined by 
	\bes
	\mathcal L= \sum_{i=1}^d X_i^2,
	\ees
	where each $X_i$ is viewed as a left-invariant differential operator on $G$.  The operator $\mathcal L$ depends on the choice of the invariant inner product $\langle \cdot,  \cdot \rangle$,  but not on the choice of the orthonormal basis.   This operator coincides with the Laplace-Beltrami operator associated with the bi-invariant Riemannian metric on $G$ induced from $\langle \cdot,  \cdot \rangle$.  If $G$ is assumed to be semisimple, one typically takes the $Ad$-invariant inner product, and hence the corresponding bi-invariant metric,  to be derived from the Killing form of $\mathfrak g$.  Let $\mu$ denote the normalized Haar measure on $G$,  that is,  the unique bi-invariant,  Borel regular probability measure on $G$.

	We denote by $\exp: \mathfrak g \ra G$ the Lie group exponential map.  This coincides with the Riemannian exponential $Exp_e$ at identity for the induced Riemannian connection on $G$.  For $X\in \mathfrak g$,  let $\widetilde X$  be the corresponding left-invariant vector field, so that $\widetilde X(g) \in T_gG$ for each $g\in G$.   Then,  for any $g$ in $G$,  one has $Exp_g(\widetilde X(g))= g \exp(X)$.  For $X\in \mathfrak g$,  we write $\|X\|$ for the norm $\mathfrak \langle X,  X \rangle^{1/2}$.  Let $B(0, r)=\{X\in \mathfrak g: \|X\|< r\}$ be the open ball centred at $0$ in $\mathfrak g$.  Since $G$ is compact,  the injectivity radius  $r_{inj}$ is strictly positive.  Then for each $0<r< r_{inj}$,  the exponential map $Exp_g: B(0, r) \ra \mathcal B(g, r)$ is a surjective and analytic diffeomorphism. 
	
	The volume form $\mu$ in geodesic polar coordinates around $g$ is given by 
	\bes
	d\mu(Exp_g( rX))= d\mu(g \exp(X))= J(r, X)~dr~d\sigma(X), 
	\ees
	where $d\sigma(X)$ denotes the canonical normalized measure of the unit sphere $\mathcal S^{d-1}$ in $\mathfrak g$.  The Jacobian $J(r, X)$ is given by
	\bes
	J(r, X)= r^{d-1}~\left(1- \frac{1}{6}R(X)r +o(r^2)\right),
	\ees
	where  $R(X)$ is Ricci curvature evaluated on the vector $X$ \cite{GHL}.  Since $R(X)$ is bounded on $\mathcal S^{d-1}$,  there exists $\mathfrak R \in (0, r_{inj})$ such that if $r<\mathfrak R$,  we have $J(r, X) \sim r^{d-1}~dr~d\sigma(X)$.  Consequently,   there exist positive constants $\mathcal C,  \mathcal C^\prime$ (independent of $g$) such that for  $0< r< \mathfrak R$ and for any measurable function $f$
\bea \label{vol-est}
&& \mathcal C^\prime \int_{\mathcal S^{d-1}}~\int_{0}^{r}~f\left(g \exp(tX) \right)~t^{d-1}~dt ~ d\sigma(X) \leq	\int_{\mathcal B(g,  r)} f(g)~d\mu(g) \nonumber\\
&& \hspace{3cm} \leq \mathcal C \int_{\mathcal S^{d-1}}~\int_{0}^{r}~f\left(g \exp(tX) \right)~t^{d-1}~dt ~ d\sigma(X).
	\eea  
In particular,  for all $g\in G$ and $0<r< \mathfrak R$
	\be  \label{est-volume}
	\mathcal C^\prime r^d \leq \mu(\mathcal B(g,  r)) \leq \mathcal C r^d.
	\ee
	
	Every element of $\mathfrak g$, and hence of its universal enveloping algebra $\mathcal U(\mathfrak g)$ gives rise to both left and right-invariant differential operators on $G$.  More precisely,  for a function $f\in C^\infty(G)$, an element $D=X_1 X_2\cdots X_j \in \mathcal U(\mathfrak g)$ with $X_1, \cdots, X_j \in \mathfrak g$ and $g\in G$,  we define 
	\bea \label{defn-dr}
	&& \widetilde{D}f(g)=f(g; D)= \left(\partial^j/ \partial t_1 \partial t_2 \cdots \partial t_j\right)_0f(g \exp t_1X_1 \cdots \exp t_jX_j) \nonumber,\\
	&& d_r(D)f(g)=f(D; g)= \left(\partial^j/ \partial t_1 \partial t_2 \cdots \partial t_j\right)_0 f(\exp t_1X_1 \cdots \exp t_jX_j g),
	\eea
	(the suffix $0$ in the right-hand side denotes that the derivatives are taken at $t_1= \cdots =t_j=0$).  Thus, one can identify $\mathcal U(\mathfrak g)$ with $\mathbb D(G)$, the algebra of left-invariant differential operators on $G$. Moreover, it is known that $\mathcal L$ lies in the center of $\mathcal U(\mathfrak g)$.

	Let $\widehat G$ denote the unitary dual of $G$,  that is,  the set of equivalence classes of strongly continuous irreducible unitary representations of $G$. 
	Let $[\xi] \in \widehat G$ denote the equivalence class of an irreducible unitary representation $\xi :G\ra \mathcal U(H_\xi)$.  Here,  $H_\xi$  denotes the representation space of $\xi$, which is finite-dimensional,  and we denote the dimension of $H_\xi$ by $d_\xi$.  For a representation $\xi$,  by choosing an orthonormal basis in the representation space $H_\xi$,  we can view $\xi$ as a matrix-valued function $\xi : G \ra M_{d_\xi}(\C)$.  Let $\xi_{ij}(g)$,  $1\leq i, j\leq d_\xi$,  be the matrix coefficients of the representation $\xi$.   
	
	Let $[\xi]\in \widehat G$. Then $\xi$ is a differentiable function on $G$ with values in $\mathcal U(H_\xi)$ \cite[Ch.  II, Thm.  2.6]{H1}.  Moreover,  for $X\in \mathfrak g$
	\be \label{rel-X}
	\widetilde X \xi(g)= \frac{d}{d t} \Big |_{t=0} \xi \left(g \exp(tX)\right)= \xi(g) ~\frac{d}{d t} \Big|_{t=0} \xi\left( \exp (tX)\right)= \xi(g)~d\xi(X),
	\ee
	where $d\xi$ denotes the derived representation of $\xi$.  We extend the representation $d\xi$ of $\mathfrak g$ to its universal algebra $\mathcal U(\mathfrak g)$.  More generally,  for any 
$D \in \mathcal U(\mathfrak g)$,  we then have  
	\be \label{rel-D}
	\widetilde{D} \xi(g)= \xi(g) ~ d\xi(D),  \:\: \textit{ for } g\in G.
	\ee
	Similarly,  for the right-invariant differential operator $d_r(D)$
	\be \label{rel-D-r}
	d_r(D) \xi(g)= d\xi(D)~\xi(g).
	\ee
	Since $\mathcal L$ is in the centre of $\mathcal U(\mathfrak g)$,  by Schur's lemma $d\xi(\mathcal L)$ acts as a constant for any irreducible representation $\xi$.   Therefore, 
	\be \label{rel-L}
	d\xi(\mathcal L) =\sum_{i=1}^d d\xi(X_i^2)=\sum_{i=1}^d d\xi(X_i)^2= -\lambda_{\xi} I,
	\ee
	or some $\lambda_\xi\geq 0$.  Since $\xi$ is unitary, $d\xi(X_i)$ is skew-Hermitian and hence the constant should be negative.  It now follows from the relations (\ref{rel-D}) and (\ref{rel-L}) that for each $[\xi] \in \widehat G$,  the matrix elements $\xi_{ij}(g)$ of $\xi$ are the eigenfunctions for the Laplace-Beltrami operator $\mathcal L$ with the same eigenvalue $-\lambda_{\xi}$,  that is, 
	\bes
	-\mathcal L \xi_{ij}(g) = \lambda_{\xi} \xi_{ij}(g), \:\: \textit{ for all } \:\: 1\leq i, j \leq d_\xi.
	\ees
	
	We denote by $\widehat{G}^\ast$ the class of equivalence representations from $\widehat G$ excluding the trivial one.  For $[\xi] \in \widehat G$,  we write $|\xi| = \sqrt{ \lambda_{\xi}} \geq 0$.  Thus,  $|\xi|>0$ for $[\xi] \in \widehat G^\ast$,  while $|\xi| = 0$  when $[\xi]$ corresponds to the trivial representation. 
We further set $\langle \xi \rangle = \left(1 + \lambda_{\xi}\right)^{1/2}$.  The following lemma shows that the growth of the dimension $d_\xi$ is controlled by the eigenvalue $|\xi|^2$.
	\begin{lem} \cite[Lemma 3.1]{DR} \label{lem-d-xi} The series $\sum_{[\xi] \in \widehat G} ~d_\xi^2 ~ \langle \xi \rangle^{-2t} < \infty$ if and only if $t > d/
		2$.
	\end{lem}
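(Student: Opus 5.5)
Lemma \ref{lem-d-xi} is quoted from \cite[Lemma 3.1]{DR}, and the plan is to reduce it to counting eigenvalues of $\mathcal L$ via Weyl's law. The first step is a regrouping. For each $[\xi]\in\widehat G$, the $d_\xi^2$ matrix coefficients $\sqrt{d_\xi}\,\xi_{ij}$ are orthonormal in $L^2(G)$ and are eigenfunctions of $-\mathcal L$ with eigenvalue $\lambda_\xi$. By the Peter--Weyl theorem, the union of these families over $\widehat G$ is an orthonormal basis of $L^2(G)$. Let $0=\mu_0\le \mu_1\le \mu_2\le\cdots$ be the eigenvalues of $-\mathcal L$, listed with multiplicity. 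Then
\bes
\sum_{[\xi]\in\widehat G} d_\xi^2\,\langle\xi\rangle^{-2t}=\sum_{k\ge 0}(1+\mu_k)^{-t}.
\ees
So the lemma amounts to saying that $(1+\mu_k)^{-t}$ is summable if and only if $t>d/2$.

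The second step is to invoke Weyl's asymptotic law for the Laplace--Beltrami operator on the compact $d$-dimensional Riemannian manifold $G$. Here $\mathcal L$ is the Laplace--Beltrami operator of the bi-invariant metric, as noted above. Weyl's law gives
\bes
N(\Lambda):=\#\{k:\mu_k\le\Lambda\}\sim c_d\,\mathrm{vol}(G)\,\Lambda^{d/2}, \qquad \Lambda\to\infty .
\ees
Equivalently, $\mu_k\asymp k^{2/d}$ for large $k$. It follows that $(1+\mu_k)^{-t}\asymp k^{-2t/d}$, and this is summable precisely when $2t/d>1$, that is, when $t>d/2$. To avoid pointwise comparisons, one can instead write the sum as a Stieltjes integral $\int_0^\infty(1+\Lambda)^{-t}\,dN(\Lambda)$ and integrate by parts. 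This yields $t\int_0^\infty (1+\Lambda)^{-t-1}N(\Lambda)\,d\Lambda$ plus a boundary term. With $N(\Lambda)\asymp\Lambda^{d/2}$ for large $\Lambda$, the integral converges if and only if $t>d/2$, which gives both directions of the lemma.

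The main obstacle is justifying two-sided bounds $N(\Lambda)\asymp \Lambda^{d/2}$; the rest is routine. If one wants to avoid quoting Weyl's law in full, a heat-kernel argument suffices. The heat trace is $\sum_k e^{-s\mu_k}=\int_G h_s(e)\,d\mu=h_s(e)$, and the standard small-time asymptotic is $h_s(e)\sim(4\pi s)^{-d/2}$ as $s\to 0^+$. A Tauberian argument (Karamata) then gives $N(\Lambda)\asymp\Lambda^{d/2}$. For the convergence direction alone there is a more direct route: use $\Gamma(t)(1+\mu)^{-t}=\int_0^\infty s^{t-1}e^{-s(1+\mu)}\,ds$ and sum under the integral, which requires only the upper bound $h_s(e)\le Cs^{-d/2}$ for $0<s\le 1$. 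For the divergence direction, I would use the lower bound $N(\Lambda)\ge c\Lambda^{d/2}$, which follows from the same heat-trace asymptotics.
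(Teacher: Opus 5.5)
Your argument is correct. The Peter--Weyl regrouping identifies the series with $\sum_k(1+\mu_k)^{-t}$ over the spectrum of $-\mathcal L$ counted with multiplicity. Weyl's law $N(\Lambda)\asymp\Lambda^{d/2}$, or your two-sided heat-trace bound, then gives convergence exactly when $t>d/2$.

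The paper gives no proof of its own; it quotes the lemma from \cite[Lemma 3.1]{DR}, and this Weyl-law eigenvalue-counting argument is the standard proof of that result. Only two cosmetic points remain. When $t\le d/2$ the boundary term at infinity does not vanish, so truncate the Stieltjes integral at a finite $R$ before integrating by parts. With the normalized Haar measure, the heat trace behaves like $\mathrm{vol}(G)(4\pi s)^{-d/2}$ rather than $(4\pi s)^{-d/2}$, which does not affect the $\asymp$ bounds.
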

	
The group Fourier transform of $f\in  L^1(G)$
at a representation $\xi$, with $[\xi] \in \widehat G$, is the operator $\widehat f(\xi) \in End(H_\xi )$ 
defined by	
\bes
	\widehat f(\xi) = \int_{G} f(g)~\xi(g)^\ast~d\mu(g),
	\ees
	where $\xi(g)^\ast$ denotes  the adjoint of the unitary matrix $\xi(g)$.  For $f\in L^2(G)$,  the Peter-Weyl theorem implies the following Fourier inversion formula \cite[Cor. 10.2.10]{RT-B}
	\be \label{FT-inversion}
	f(g)= \sum_{[\xi]\in \widehat G} d_{\xi} Tr\left(\xi(g)~\widehat f(\xi) \right),
	\ee
	for almost every $g\in G$ as well as in $L^2(G)$.  If $f\in C^\infty(G)$,  the Fourier series (\ref{FT-inversion}) converges uniformly on $G$ \cite[Ch. V,  \$ 2-3]{H2}.  The Plancherel formula takes the form
	\be \label{Plancherel}
	\|f\|_{L^2(G)} ^2= \sum_{[\xi]\in \widehat G} d_\xi~ Tr \left(\widehat f(\xi)  \widehat f(\xi)^\ast \right),
	\ee
	\cite[Prop. 10.3.17]{RT-B}.   For $\Lambda>0$,  we define the spectral projection  operator $P_{\Lambda}= \chi_{[0, \Lambda]}(\mathcal L)$ up to energy $\Lambda$ by
	\be \label{defn-PLambda}
	P_\Lambda f (g)= \sum_{\{[\xi]\in \widehat G: |\xi|\leq \sqrt{\Lambda}\}} d_\xi Tr\left(\xi(g) \widehat f(\xi)\right),  
	\ee
	for all $f\in L^2(G)$.  
	
	\section{Uniqueness results on compact Lie groups}
	
	In this section,  we prove Theorem \ref{thm-main}. Throughout this section,  $\mathcal N  = \{N_n\}_{n=0}^\infty$ be any log-convex sequence satisfying the quasi-analyticity condition (\ref{quasi-cond}) and $N_0=1$.  For the interval $I=[0, 1]$, we refine the definition of the quasi-analytic class.  Precisely,
	\bes
	C_{\mathcal N} [0, 1]=\{h\in C^\infty[0, 1]: \|h^{(n)}\|_{L^\infty[0, 1]}\leq N_n, \:\: \forall n\in \N_0\}.
	\ees 
	For $h \in C_{\mathcal N} [0, 1]$, the Bang degree $n_h$ is defined by
	\bes
	n_h= \max \left\{k\in \N : \sum_{\log \|h\|_{L^\infty[0,1]}^{-1}<n \leq k} \frac{N_{n-1}}{N_n} < e \right\}.
	\ees
	It is well-known that the Bang degree controls the number of zeros of a function in $C_{\mathcal N}[0, 1]$,  counting multiplicities, therefore,  plays the same role as the degree of polynomials \cite{NSV}.  Clearly, $n_h$ depends on both the decay of the ratios of $N_{n-1}/N_n$ and a lower bound for $\|h\|_{L^\infty[0, 1]}$. Since $\mathcal N$ satisfies (\ref{quasi-cond}), the quantity $n_h$ is always finite.  For our purposes, we will want uniform bounds on $n_h$ for arbitrary functions $h \in C_\mathcal N[0,  1]$. To achieve this,  we set, for $s \in (0, 1]$
	\be \label{defn-Gamma}
	n_{\mathcal N, s}= \max \left\{k\in \N: \sum_{-\log s <n \leq k} \frac{N_{n-1}}{N_n} < e \right\}.
	\ee
	Therefore, if $h \in C_\mathcal N[0, 1]$ satisfies $\sup_{t \in [0, 1]} |h(t)| \geq s$, then $n_h\leq n_{\mathcal N, s}$.  As in  \cite{JM, NSV}, we also define
	\be \label{defn-ns}
	\gamma_{\mathcal N}(n)= \sup_{1\leq l\leq n} l\left(\frac{N_{l+1}N_{l-1}}{N_l^2}-1 \right), \:\: \textit{ and } \Gamma(n)=4e^{4+4\gamma_{\mathcal N}(n)}.
	\ee
	We recall the following result of Nazarov-Sodin-Volberg \cite[Theorem B]{NSV}.
	\begin{thm} \label{thm-NSV}
		Suppose that $h\in C_{\mathcal N}[0, 1]$. Then for any interval $I \subset [0, 1]$ and measurable set $E \subset I$ with $| E| > 0$, we have
		\bes
		\|h\|_{L^\infty(I)}\leq \left(\frac{\Gamma(2n_h) |I|}{|E|}\right)^{2n_h} \|h\|_{L^\infty(E)}.
		\ees
	\end{thm}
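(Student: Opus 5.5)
This is \cite[Theorem B]{NSV}, and the paper only quotes it, so what follows is my reconstruction of how I would prove it rather than the paper's argument. The model is the classical Remez inequality for polynomials of degree $n$:
\bes
\|P\|_{L^\infty(I)}\leq \left(\frac{4|I|}{|E|}\right)^{n}\|P\|_{L^\infty(E)}.
\ees
The Bang degree $n_h$ should play the role of the polynomial degree. So the plan is to compare $h$ on $I$ with an interpolating polynomial of degree about $2n_h$ built from nodes in $E$, and to show that the interpolation error is small relative to $\|h\|_{L^\infty(I)}$.

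\emph{Step 1 (nodes).} Set $n=2n_h$. Choose $n+1$ points $t_0<\dots<t_n$ in $E$ that are well separated, with $|\{t\in E: t\leq t_j\}|= j|E|/n$. Each gap then contains a portion of $E$ of measure $|E|/n$, so the Lagrange basis polynomials satisfy
\bes
\sup_{t\in I}\sum_j |\ell_j(t)|\leq \left(\frac{C|I|}{|E|}\right)^{n}.
\ees
This is the standard estimate behind the polynomial Remez inequality. Writing $P$ for the Lagrange interpolant of $h$ at the nodes, we get $\|P\|_{L^\infty(I)}\leq (C|I|/|E|)^n\|h\|_{L^\infty(E)}$.

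\emph{Step 2 (remainder).} The error is
\bes
h(t)-P(t)=\frac{h^{(n+1)}(\tau)}{(n+1)!}\prod_j (t-t_j),
\ees
so $\|h-P\|_{L^\infty(I)}\leq N_{n+1}|I|^{n+1}/(n+1)!$. This bound is useless unless it is small compared with $\|h\|_{L^\infty(I)}$. Here I would invoke Bang's theorem: the number of zeros of $h$ on $[0,1]$, counted with multiplicity, is at most $n_h$. More precisely, I would use Bang's quantitative lemma, which controls the derivatives of $h$ of order about $n_h$ in terms of $\|h\|$ itself.

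\emph{Step 3 (localisation).} Rather than using the global bound $N_{n+1}$, I would replace it by the ``local'' size of $h^{(n+1)}$ on $I$ relative to $\|h\|_{L^\infty(I)}$. This is done by iterating Markov/Bernstein-type bounds for $h$ on the subinterval $I$, where the log-convexity defect $\gamma_{\mathcal N}(n)$ measures how far $N_{l+1}/N_l$ can jump. The aim is
\bes
\|h^{(k)}\|_{L^\infty(I)}\leq \left(\frac{e^{4+4\gamma_{\mathcal N}(n)}k}{|I|}\right)^{k}\|h\|_{L^\infty(I)}, \qquad k\leq 2n_h.
\ees
With this bound the remainder becomes at most $\tfrac12\|h\|_{L^\infty(I)}$, possibly after splitting $I$ into $O(1)$ pieces. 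The remainder can then be absorbed into the left-hand side, which gives the stated constant $(\Gamma(2n_h)|I|/|E|)^{2n_h}$.

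\emph{Main obstacle.} Step 3 is the hard part. It requires transferring Bang's zero-counting, which is a global statement on $[0,1]$ with threshold $\log\|h\|^{-1}$, to a derivative bound on an arbitrary subinterval $I$. I would attempt it by contradiction. If some derivative $h^{(k)}$ with $k\leq 2n_h$ were too large relative to $h$ on $I$, then Rolle's theorem applied to suitable perturbations would produce more than $n_h$ zeros, counted with multiplicity, of a function that still lies in a rescaled class $C_{\mathcal N}$. That contradicts Bang's bound. The constant $e$ in the definition of $n_h$ and the factor $e^{4\gamma_{\mathcal N}}$ should both come out of this counting.
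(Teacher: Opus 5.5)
The paper does not prove this statement. It is quoted from \cite[Theorem B]{NSV}, so your sketch has to stand on its own, and it has a genuine gap in Step 3. Steps 1--2 are the standard Lagrange--Remez estimate. The whole argument then rests on absorbing the interpolation error, i.e.\ on $\|h-P\|_{L^\infty(I)}\leq \frac12\|h\|_{L^\infty(I)}$ for the interpolant $P$ of degree $n=2n_h$ with nodes in $E$. This is false in general, however $h^{(n+1)}$ is estimated. Here is a counterexample:
\begin{itemize}
\item Take $c\geq 2$, $N_j=(2c)^j j!$ and $h(x)=\frac{1}{4(1+c^2x^2)}$. On discs of radius $1/(2c)$ centred on $[0,1]$ one has $|h|\leq 1$, so Cauchy's estimates give $h\in C_{\mathcal N}[0,1]$.
\item Let $I=[0,1]$ and $E=[0,\delta]$. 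As $\delta\to 0$, the interpolant at any $n+1$ nodes of $E$ converges uniformly on $[0,1]$ to the Taylor polynomial of degree $2n_h$ of $h$ at $0$.
\item The value of that Taylor polynomial at $1$ is
\[
\frac14\sum_{k=0}^{n_h}(-c^2)^k, \qquad \text{which has modulus at least } \frac{c^{2n_h}}{8}.
\]
\item Meanwhile $\|h\|_{L^\infty(I)}=\frac14$ and $|h(1)|\leq \frac1{20}$.
\end{itemize}
So $\|h-P\|_{L^\infty(I)}$ exceeds $\|h\|_{L^\infty(I)}$ by a factor of order $c^{2n_h}$: $P$ and $h-P$ are both huge and cancel. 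The conclusion of the theorem is trivial in this example, since $\|h\|_{L^\infty(E)}\to\frac14$. It is the absorption method that fails, not the statement.

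The Step 3 estimate is also false as written. For $h(x)=\sin \omega x$, $N_j=\omega^j$ (so $\gamma_{\mathcal N}\equiv 0$ and $n_h\geq 1$) and $I=[0,1]$, the case $k=1$ says $\omega\leq e^4$. Even at top order it would only put the factor $(e^{4+4\gamma}k)^k/k!\geq e^{4k}$ in front of $\|h\|_{L^\infty(I)}$, not $\frac12$. The proposed contradiction also runs the wrong way. Rolle's theorem passes zeros of $h$ to its derivatives, but a large derivative on $I$ produces no zeros at all.

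What is missing is a use of Bang's theorem as a zero count, in the regime that matters: $\|h\|_{L^\infty(E)}$ tiny compared with $\|h\|_{L^\infty(I)}$. In that regime $P$ is small on $I$. One then wants $h-P$, a perturbation of $h$ with $2n_h+1$ zeros, to contradict a Bang-type bound for a comparable class. The hard part is keeping the perturbation in such a class with controlled Bang degree, especially on a short interval $I$, where rescaling changes the Bang degree. That is where the room between $n_h$ and $2n_h$ and the parameter $\gamma_{\mathcal N}(2n_h)$ must be spent. This is the real difficulty of \cite[Theorem B]{NSV}, and your sketch does not address it.
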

	To prove Theorem \ref{thm-main},  we require the following result.  
	\begin{lem} \label{lem-Sobolev}
		There exists $\mathfrak C>1$ such that for all $f\in C^\infty(G)$,  $X\in \mathcal S^{d-1}$ and for all $n\in \N_0$,  we have
		\beas
		\| \widetilde X^n f \|_{L^\infty(G)} &\leq &  \mathfrak C^{n+1} 
		\begin{cases} 
			\|\mathcal L^{d+n/2} f\|_{L^1(G)},  & \text{ if $n$ is even }; \\
			\|\mathcal L^{d+(n-1)/2} f\|_{L^1(G)}^{1/2}~\|\mathcal L^{d+(n+1)/2} f\|_{L^1(G)}^{1/2},  & \text{ if $n$ is odd }.
		\end{cases}
		\eeas
	\end{lem}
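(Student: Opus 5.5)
We argue through the Fourier series (\ref{FT-inversion}); for $f\in C^\infty(G)$ it converges uniformly, and so do its termwise derivatives.

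\emph{Differentiating termwise.} Fix $X\in\mathcal S^{d-1}$. By (\ref{rel-X}), applying $\widetilde X^n$ termwise gives
\bes
\widetilde X^n f(g)=\sum_{[\xi]\in\widehat G} d_\xi\, Tr\big(\xi(g)\, d\xi(X)^n \widehat f(\xi)\big).
\ees
Since $d\xi(X)$ is skew-Hermitian and $-d\xi(X)^2\le -\sum_i d\xi(X_i)^2=\lambda_\xi I$, its operator norm satisfies $\|d\xi(X)\|_{op}\le |\xi|$.

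\emph{Bounding each term.} For each $\xi$,
\bes
|Tr(\xi(g)\, d\xi(X)^n\widehat f(\xi))|\le \|d\xi(X)\|_{op}^n\, \|\widehat f(\xi)\|_{tr}\le |\xi|^n\, d_\xi\, \|\widehat f(\xi)\|_{op}.
\ees
By (\ref{rel-L}), $\widehat{\mathcal L^m f}(\xi)=(-\lambda_\xi)^m\widehat f(\xi)$, because $\mathcal L$ is formally self-adjoint and central. Hence for every $[\xi]\in\widehat G^\ast$ and every $m$,
\bes
\|\widehat f(\xi)\|_{op}\le \lambda_\xi^{-m}\,\|\mathcal L^m f\|_{L^1(G)}.
\ees

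\emph{Even $n$.} Take $m=d+n/2$. The nontrivial representations contribute at most
\bes
\sum_{\xi\neq \mathrm{triv}} d_\xi^2\,\lambda_\xi^{n/2}\,\lambda_\xi^{-d-n/2}\,\|\mathcal L^{d+n/2}f\|_{L^1}
=\Big(\sum_{\xi\neq\mathrm{triv}} d_\xi^2\lambda_\xi^{-d}\Big)\,\|\mathcal L^{d+n/2}f\|_{L^1}.
\ees
Since $\lambda_\xi\ge \lambda_{\min}>0$ on $\widehat G^\ast$, we have $\lambda_\xi\ge c\langle\xi\rangle^2$ there. Lemma \ref{lem-d-xi} with $t=d>d/2$ then makes this sum a finite constant $A$.

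\emph{The trivial representation.} Its term is $|\xi|^n\widehat f$ with $|\xi|=0$, so it vanishes for $n\ge1$. For $n=0$ it equals $|\int_G f\,d\mu|$, which is not controlled by $\|\mathcal L^d f\|_{L^1}$: a constant $f$ is a counterexample. So the stated inequality with $n=0$ must either be understood with an extra $\|f\|_{L^1}$ term or restricted to $n\ge1$. We handle $n=0$ by noting it is only needed up to an additive mean term, and flag this as an imprecision in the statement.

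\emph{Odd $n$.} Here the plan is not to use Cauchy--Schwarz on the sum but to apply it pointwise in $\xi$:
\bes
\lambda_\xi^{n/2}\|\widehat f(\xi)\|_{op}\le \big(\lambda_\xi^{(n-1)/2}\|\widehat f(\xi)\|_{op}\big)^{1/2}\big(\lambda_\xi^{(n+1)/2}\|\widehat f(\xi)\|_{op}\big)^{1/2}.
\ees
Bound the first factor using $m=d+(n-1)/2$ and the second using $m=d+(n+1)/2$. Their product is at most
\bes
\lambda_\xi^{-d}\,\|\mathcal L^{d+(n-1)/2}f\|_{L^1}^{1/2}\,\|\mathcal L^{d+(n+1)/2}f\|_{L^1}^{1/2}.
\ees
Summing against $d_\xi^2$ again gives the finite constant $A$.

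The resulting constant $A$ does not depend on $n$. Choosing $\mathfrak C=\max(A,2)$ yields the bound $\mathfrak C^{n+1}$. The main obstacle is the trivial representation at $n=0$ described above; everything else follows directly from Schur's lemma and Lemma \ref{lem-d-xi}.
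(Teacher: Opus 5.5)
Your argument is correct for $n\ge 1$ and is essentially the paper's proof: Fourier inversion, a bound $\|d\xi(X)\|_{op}\lesssim|\xi|$, control of $\widehat{\mathcal L^m f}(\xi)$ by $\|\mathcal L^m f\|_{L^1(G)}$, and Lemma \ref{lem-d-xi} with $t=d$. The differences are cosmetic: you use trace/operator-norm duality instead of Hilbert--Schmidt bounds, you get the sharper $\|d\xi(X)\|_{op}\le|\xi|$ in place of the paper's $\sqrt d\,|\xi|$ (so no $d^{n/2}$ factor), and you split the odd case pointwise in $\xi$ rather than by Cauchy--Schwarz over the sum.

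Your objection at $n=0$ is also right: $f\equiv 1$ violates the stated inequality. The paper's proof has the same gap, since it silently passes from the sum over $\widehat G$ to a sum over $\widehat G^\ast$, which is legitimate only when $n\ge 1$ kills the trivial term.
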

	
	\begin{proof}
		Let $f\in C^\infty(G)$ and $X\in \mathcal S^{d-1}$.  It follows from the inversion formula (\ref{FT-inversion}) and the relation (\ref{rel-D}) that for $g\in G$ and $n\in \N_0$
		\bea \label{est-X}
		|\widetilde X^n f (g)| &=& |\sum_{[\xi] \in \widehat{G}} ~d_{\xi} ~Tr \left((\widetilde X^n \xi)(g) ~ \widehat{f}(\xi) \right)| \nonumber \\
		&\leq&  \sum_{[\xi] \in \widehat{G}} ~d_{\xi} ~\|\xi(g)~d\xi(X^n) \|_{HS} ~\| \widehat{f}(\xi)\|_{HS} \nonumber \\
		&\leq&  \sum_{[\xi] \in \widehat{G}} ~d_{\xi} ~\|d\xi(X^n) \|_{op}~\|\xi(g)\|_{HS} ~\| \widehat{f}(\xi)\|_{HS}.
		\eea
		First,  we compute the operator norm of $d\xi(X^n)$.  
		We recall that $\{X_1, \cdots, X_d\}$ is an orthonormal basis of $\mathfrak g$.  Since $d\xi(X_i)$ is skew-Hermitian for each $i=1, \cdots,  d$,  it follows from the relation (\ref{rel-L}) that  for any $v\in H_\xi$,
		\bes
		\sum_{i=1}^d \| d\xi(X_i)v\|^2 =- \sum_{i=1}^d \langle d\xi(X_i)^2v, v \rangle =  -\langle d\xi(\mathcal L) v, v \rangle = \lambda_\xi \|v\|^2.
		\ees
		Hence,  $\|d\xi(X_i)\|_{op}\leq |\xi|$,  for each $i$.  Since $X\in \mathcal S^{d-1}$,  there exist scalars $c_i$ with $\sum_{i=1}^d c_i^2=1$ such that $X=\sum_{i=1}^dc_iX_i$.  It follows that 
		$\|d\xi(X)\|_{op} \leq \sqrt d ~ |\xi |$ and consequently, 
		\be \label{est-X-1}
		\|d\xi(X^n)\|_{op}\leq \|d\xi(X)\|_{op}^n\leq d^{n/2} |\xi|^n, \:\: \textit{ for all } \:\:  n\in \N_0.
		\ee
		Since $\xi(g)$ is unitary,  it follows that $\|\xi(g)\|_{HS}=\sqrt d_{\xi}$.  Consequently,  in view of (\ref{est-X-1}),  the estimate (\ref{est-X}) yields
		\be \label{est-X-2nd}
		|\widetilde X^n f(g)| \leq d^{n/2} \sum_{[\xi] \in \widehat G}~d_\xi^{3/2}~|\xi|^n~ \|\widehat{f} (\xi) \|_{HS}.
		\ee
		It follows from the definition of the Fourier transform that 
		\bes
		\|\widehat f(\xi)\|_{HS} \leq \int_G |f(g)|~\|\xi(g)^\ast\|_{HS}~d\mu(g) \leq \sqrt d_\xi \|f\|_{L^1(G)}.
		\ees  
		Therefore,  the relation $\widehat{\mathcal L^{n} f}(\xi) = |\xi|^{2n} \widehat f(\xi)$ yields
		\be \label{est-HS}
		|\xi|^{2n}~\|\widehat f(\xi)\|_{HS} = \|\widehat{\mathcal L^n f}(\xi)\|_{HS} \leq \sqrt d_\xi \|\mathcal L^n f\|_{L^1(G)}.
		\ee
		Let $\lambda_1 > 0$ be the smallest positive eigenvalue of  $- \mathcal{L}$.  It is easy to show that, for $[\xi] \in \widehat{G}^\ast$
		\bes
		|\xi|^{-k} \leq {C_1}^{k} {\langle \xi \rangle}^{-k}, \; \textit{ where } \: \: C_1 = \left(1 + \frac{1}{\lambda_1} \right)^{1/2}, \:\:  k\in \N_0.
		\ees
		We first consider the case $n$ is even.  Using the estimate (\ref{est-HS}),  it follows from (\ref{est-X-2nd}) that
		\beas
		|\widetilde X^n f(g)| &\leq& d^{n/2}C_1^{2d}~\sup_{\xi \in \widehat G}  \left( d_\xi^{-1/2}~|\xi |^{2d+n}~  \|\ \widehat{f} (\xi) \|_{HS}\right)~ \sum_{[\xi] \in \widehat{G}^\ast}~d_\xi^2~ \langle \xi \rangle^{-2d}\\
		&\leq& d^{n/2}C_1^{2d}~C_d~ \|\mathcal L^{d+n/2} f\|_{L^1(G)},
		\eeas
		where $C_d=\sum_{[\xi] \in \widehat{G}^\ast}~d_\xi^2~ \langle \xi \rangle^{-2d}$,  which is finite by Lemma \ref{lem-d-xi}. 
		If $n$ is an odd integer,  applying H\"older's inequality on the right-hand side of (\ref{est-X-2nd}),  we get that
		\beas
		|\widetilde X^n f(g)| &\leq& d^{n/2} \left( \sum_{[\xi] \in \widehat{G}^\ast}~d_\xi^{3/2}~|\xi|^{n-1}~ \|\widehat{f} (\xi) \|_{HS} \right)^{1/2}~ \left(\sum_{[\xi] \in \widehat{G}^\ast}~d_\xi^{3/2}~|\xi|^{n+1}~ \|\widehat{f} (\xi) \|_{HS}\right)^{1/2}\\
		& \leq&  d^{n/2}~C_1^{2d}~C_d~ \|\mathcal L^{d+(n-1)/2} f\|_{L^1(G)}^{1/2}~\|\mathcal L^{d+(n+1)/2} f\|_{L^1(G)}^{1/2}.
		\eeas
		This completes the proof.
	\end{proof}
	
	\begin{proof}[Proof of Theorem \ref{thm-main}]
		Without loss of generality,  we assume $\|f\|_{L^p(G)}=1$.  Since $\Omega$ is $\gamma$-relatively dense set,  there exists $r \in (0,  \mathfrak R/2)$ such that $\mu(\mathcal B(g, r) \cap \Omega) \geq \gamma \mu(\mathcal B(g, r))$. Without loss of generality, we may assume that $r \leq 1$.  Clearly,  for every $g\in G$,  the exponential map  $Exp_g: B(0, r) \ra \mathcal B(g,  r)$ is surjective.  We find a finite collection $g_j$ in $G$ such that $G=\cup_{j=1}^N \mathcal B_j(g_j,  r)$. From now onwards we use $\mathcal B_j$ to denote $\mathcal B(g_j, r)$.  We note that 
		\be \label{est-GB}
		N^{-1} \sum_{j=1}^N \|f\|_{L^p(\mathcal B_j)} \leq  \|f\|_{L^p(G)} \leq \sum_{j=1}^N  \|f\|_{L^p(\mathcal B_j)}.
		\ee
		We first consider the case $p\in [1, \infty)$.  Let $B$ be a sufficiently large number and its exact value will be chosen later.  A ball $\mathcal B_j$ is said to  be bad if there exists $n\in \N$ and $X_j\in \mathcal S^{d-1}$ such that 
		\bes
		\int_{\mathcal B_j} \left| \widetilde X_j^{n} f(g)\right|^p ~d\mu(g) > B^{n} \bar M_{n}^p \int_{\mathcal B_j} |f(g)|^p~d\mu(g),
		\ees
		where
		\be \label{defn-barM}
	    \Bar M_n  =
		\begin{cases} 
		 M_{2d+n},  & \text{ if $n$ is even }; \\
		 \sqrt{M_{2d+n-1} M_{2d+n+1}},  & \text{ if $n$ is odd }.
		\end{cases}
		\ee
		If a ball is not bad,  we call it good.  Precisely,  if $\mathcal B_j$ is good,  then for all $n\in \N$ and $X \in \mathcal S^{d-1}$ 
		\be \label{defn-good}
		\int_{\mathcal B_j} \left| \widetilde X^{n} f(g)\right|^p ~d\mu(g) \leq B^{n} \bar M_{n}^p \int_{\mathcal B_j} |f(g)|^p~d\mu(g).
		\ee
		It follows from the definition that if $\mathcal B_j$ is bad,  then 
		\be \label{est-bad}
		\int_{\mathcal B_j}  |f(g)|^p~d\mu(g) < \sum_{n\in \N}\frac{1}{B^{n}~\bar M_{n}^p}  \int_{\mathcal B_j}  |\widetilde X_j^{n} f(g)|^p~d\mu(g).
		\ee
		We first observe that since $f\in C_{\mathcal M, p}(G)$, and $\mu$ is a normalised measure on $G$, we get that $\|\mathcal L^n f\|_{L^1(G)}\leq M_{2n}$. Consequently,  using Lemma \ref{lem-Sobolev}, we obtain that for all $X\in \mathcal S^{d-1}$
		\be \label{est-bad-union}
		\int_{\mathcal B_j}  |\widetilde X^{n} f(g)|^p~d\mu(g)\leq \mu(\mathcal B_j)~ \|\widetilde X^{n} f\|_{L^\infty(\mathcal B_j)}^p\leq  \mathfrak C^{p(n+1)} ~ \bar M_n^p.
		\ee
		Let $\mathcal B_{bad}=\cup \mathcal B_j$,  where the balls $\mathcal B_j$'s are bad.  Similarly,  $\mathcal B_{good}= \cup \mathcal B_j$, where $\mathcal B_j$ are good balls. Then, the estimates (\ref{est-bad-union}) yields
		\beas
		\int_{\mathcal B_{bad}}  |f(g)|^p~d\mu(g) &\leq&  \sum_{\{j: \mathcal B_j \textit{ is bad }\}} \int_{\mathcal B_j} | f(g)|^p~d\mu(g)\\
		&\leq&  \sum_{n\in \N}\frac{1}{B^{n}~\bar{M}_{n}^p} \sum_{\{j: \mathcal B_j \textit{ is bad }\}} \int_{\mathcal B_j} | \widetilde X_j^n f(g)|^p\\
		&\leq& N \sum_{n\in \N} \frac{\mathfrak C^{p(n+1)} }{B^{n}}\leq \frac{1}{2} \|f\|_{L^p(G)}^p,
		\eeas
		by choosing $B$ large enough. Consequently,
		\be \label{est-good}
		\int_{\mathcal B_{good}} |f(g)|^p~ d\mu(g)  \geq \frac{1}{2} \|f\|_{L^p(G)}^p.
		\ee
		We now fix a good ball $\mathcal B_j$,  and henceforth we will denote it by $\mathcal B$,  omitting the subscript $j$.  Let $g_0 \in \mathcal B$ be a point such that $|f(g_0)| \geq \|f\|_{L^p(\mathcal B)} ~\mu(\mathcal B)^{-1/p}$.  This is possible and can be shown by contradiction.  For $X\in \mathcal S^{d-1}$,  we define the geodesic curve segment $c_X(t)= g_0 \exp (tX)\subset \mathcal B$ starting at $g_0$.  Precisely,  we have $c_X(0)=g_0$ and $\dot{c}_X(0)=dL_g X_e=\widetilde X(g_0)$.  Applying estimate (\ref{vol-est}),  we get that 
		\beas
		\mu(\mathcal B\cap \Omega ) \leq \mathcal C~\int_{\mathcal S^{d-1}}~\int_{0}^{2r} \chi_{\mathcal B\cap \Omega}(c_X(t))~t^{d-1}~ dt~ d\sigma(X).
		\eeas
		Consequently,  there exists $Y \in \mathcal S^{d-1}$ such that
		\be \label{est-B-00}
		\mu(\mathcal B\cap \Omega ) \leq \mathcal C \int_{0}^{2r} \chi_{\mathcal B\cap \Omega}(c_{Y}(t))~t^{d-1}~ dt.
		\ee 
Let $l=\sup\{t\in[0,2r]: c_Y(t)\in \mathcal B\}$ and $I=\{c_Y(t): 0\leq t < l\} \subset \mathcal B$ .  We are considering the unit speed geodesic: $|\dot{c}_Y(t)|=\langle \widetilde Y(c_Y(t)),   \widetilde Y(c_Y(t)) \rangle_g^{1/2}=1$.  Hence,  the arc length measure of the set $I \cap \Omega$  is given by
		\bes
		|I \cap \Omega | =\int_{0}^{l} \chi_{I \cap \Omega}(c_Y(t))~|\dot{c}_Y(t)|~dt = \int_{0}^{l} \chi_{I \cap \Omega}(c_Y(t))~dt.
		\ees
		We claim that: there exists $\mathfrak c>1$ such that $|I \cap \Omega|/|I|\geq \gamma/\mathfrak c$.  Indeed,  using the estimate (\ref{est-B-00}) 
		\bes
	\mu(\mathcal B\cap \Omega ) \leq \mathcal C~l^{d-1}~ |I \cap \Omega| = \mathcal C~l^d \frac{|I\cap \Omega|}{|I|}.
		\ees
Hence,  using the fact that $l\leq 2r$,  the estimate (\ref{est-volume}) yields
		\be \label{est-ball-arc}
		\frac{\mu(\mathcal B\cap \Omega )}{\mu(\mathcal B)} \leq \frac{2^d \mathcal C ~|I\cap \Omega|}{\mathcal C^\prime~|I|}.
		\ee
Let $\mathfrak c= 2^d \mathcal C/\mathcal C^\prime$. Since $\Omega$ is $\gamma$-relatively dense,  we conclude that $|I \cap \Omega | \geq \frac{ \gamma}{\mathfrak c} |I|$.  Let us consider
 \bes
 A = \{t \in [0,1] : g_0 \exp(t l Y) \in I\cap \Omega \}.
 \ees 
 Then $|A|= \frac{|I \cap \Omega|}{l}$.  It follows from Sobolev embedding theorem \cite[Theorem 8.8]{Brz}  that there exists $\mathfrak c_{emb}>0$ such that 
 \be \label{Sobolev-emb}
 \|u\|_{L^\infty(0, 1)} \leq \mathfrak{c}_{emb} \left(\|u\|_{L^p(0,1)} + \|\frac{d}{dt}u\|_{L^p(0,1)}\right), \; \;\; \text{for all $u \in W^{1,p}(0,1)$}.
 \ee
  We now define 
		\bes
		F_Y(t)=\frac{1}{4\|f\|_{L^p(\mathcal B)}\mathfrak{c}_{emb} B^{1/p}~ \sqrt{M_{2d} M_{2d+2}}}~f(g_0 \exp(t l Y)),  \:\: t\in [0, 1].
		\ees 
		Therefore, 
		\beas
		\frac{d}{dt} F_Y(t) &=& \frac{1}{4 \|f\|_{L^p(\mathcal B)}\mathfrak{c}_{emb} B^{1/p}~ \sqrt{M_{2d} M_{2d+2}}}  \frac{d}{du} \Big |_{u=0} f(g_0 \exp (t+u)lY)\ \\
		&=& \frac{1}{4 \|f\|_{L^p(\mathcal B)}\mathfrak{c}_{emb} B^{1/p}~ \sqrt{M_{2d} M_{2d+2}}}  (\widetilde Y f)(g_0~\exp (tl Y)),  
		\eeas
		and by induction
		\be \label{est-derivatives}
		\frac{d^n}{dt^n} F_Y(t)=\frac{l^n}{4\|f\|_{L^p(\mathcal B)}\mathfrak{c}_{emb} B^{1/p}~ \sqrt{M_{2d} M_{2d+2}}} ~(\widetilde Y^n f)(g_0 \exp tlY),  \:\: \textit{  for } \:\: t\in [0, 1]. 
		\ee
		We recall that $l \leq2r\leq 2$. It now follows from the Sobolev estimate \eqref{Sobolev-emb} and the relation (\ref{est-derivatives}) that
		\beas
		\|\frac{d^n}{dt^n} F_Y\|_{L^\infty(0, 1)} &\leq& \mathfrak c_{emb} \left(\|\frac{d^{n}}{dt^{n}} F_Y\|_{L^p(0, 1)} + \|\frac{d^{n+1}}{dt^{n+1}} F_Y\|_{L^p(0, 1)}\right)\\
		&\leq& \frac{2^{n-1}}{\|f\|_{L^p(\mathcal B)} B^{1/p}~ \sqrt{M_{2d} M_{2d+2}}} ~\left(\|\widetilde Y^n f\|_{L^p(\mathcal B)} + \|\widetilde Y^{n+1}f\|_{L^p(\mathcal B)}\right).
		\eeas
		Since $M_{n}$ is log-convex non-decreasing and the fact that $\mathcal B$ is a good ball that, it follows that for all $n\in \N_0$ 
		\beas
		\|\frac{d^n}{dt^n} F_Y\|_{L^\infty(0, 1)} &\leq& \frac{2^n~ B^{(n+1)/p}}{B^{1/p}~ \sqrt{M_{2d} M_{2d+2}}}\sqrt{M_{2d+n} M_{2d+n+2}}\\
		&=& \frac{2^n~B^{n/p}}{\sqrt{M_{2d} M_{2d+2}}}\sqrt{M_{2d+n} M_{2d+n+2}}.
		\eeas
		We now define a new log-convex sequence $\widetilde {\mathcal M} =\{\widetilde M_n\}_{n\in \N_0}$ as follows
		\bea \label{defn-tilde-M}
		\widetilde M_n &=& \frac{2^n~B^{n/p}}{\sqrt{M_{2d} M_{2d+2}}}\sqrt{M_{2d+n} M_{2d+n+2}}.
		\eea
		Then it follows that $F_Y \in C_{\widetilde{\mathcal M}}[0, 1]$.   Clearly,  $\widetilde M_0=1$.  Theorem \ref{thm-NSV} concludes
		\be \label{est-FY-0}  
		\|F_Y\|_{L^\infty[0, 1]} \leq \left(\frac{\Gamma(2n_{F_Y})}{|A|}\right)^{2 n_{F_Y}}  \|F_Y\|_{L^\infty(A)}.
		\ee
		It follows from the condition imposed on the point $g_0$ that 
		\bes
		|F_Y(0)|=  \frac{|f(g_0)|}{4\|f\|_{L^p(\mathcal B)}\mathfrak{c}_{emb} B^{1/p}~ \sqrt{M_{2d} M_{2d+2}}} \geq  \frac{1}{4\mathfrak{c}_{emb} B^{1/p}~ \sqrt{M_{2d} M_{2d+2}}~ \mu(\mathcal B)^{1/p}}.
		\ees  
		We set 
		\be \label{defn-s}
		s= \min \{1,  4^{-1} {\mathfrak{c}_{emb}}^{-1} B^{-1/p} M_{2d}^{-1/2} M_{2d+2}^{-1/2}~ \mu(\mathcal B(e,  r))^{-1/p}\}. 
		\ee

		Since, the measure $\mu$ is $G$-invariant,  we get $\mu(\mathcal B(e, r))= \mu(\mathcal B(g, r))$, for all $g\in G$ and therefore,  the above inequality implies that
		\bes
		\sup_{t\in [0, 1]} |F_Y(t)|\geq s,   \textit{ and hence } \:\: n_{F_Y}\leq n_{\widetilde{\mathcal M},  s}.
		\ees
Consequently,  using the fact that $\Gamma(n)$ is non-decreasing,  it follows from the estimate (\ref{est-FY-0}) that
		\be  \label{est-NSV}
		\|F_Y\|_{L^\infty[0, 1]} \leq \left(\frac{\Gamma(2n_{\widetilde{\mathcal M}, s})}{|A|}\right)^{2 n_{\widetilde{\mathcal M}, s}}  \|F_Y|\|_{L^\infty(A)}.
		\ee
		We recall that $|A|= |I \cap \Omega|/|I|$.  Applying successively the estimates (\ref{est-NSV}) and (\ref{est-ball-arc}) we get
		\beas
		\sup_{\mathcal B\cap \Omega} |f | &\geq&  \sup_{I \cap \Omega} |f |\\
		&=& 4 \|f\|_{L^p(\mathcal B)} \mathfrak{c}_{emb}~ B^{1/p} \sqrt{M_{2d} M_{2d+2}}~ \sup_{t\in A} |F_Y (t)|\\
		&\geq&4 \|f\|_{L^p(\mathcal B)} \mathfrak{c}_{emb}~ B^{1/p} \sqrt{M_{2d} M_{2d+2}}~ \left(\frac{|I \cap \Omega|}{\Gamma(2n_{\widetilde{\mathcal M}, s}) |I|}\right)^{2 n_{\widetilde{\mathcal M}, s}} \sup_{t\in [0, 1]} |F_Y (t)|\\
		&\geq& 4 \|f\|_{L^p(\mathcal B)} \mathfrak{c}_{emb}~ B^{1/p} \sqrt{M_{2d} M_{2d+2}}~ \left(\frac{\mathcal C^\prime \mu(\mathcal B\cap \Omega)}{2^d~\mathcal C ~\Gamma(2n_{\widetilde{\mathcal M}, s}) ~\mu(\mathcal B)}\right)^{2 n_{\widetilde{\mathcal M}, s}} ~ |F_Y (0)|\\
		&\geq& \left(\frac{ \mu(\mathcal B \cap \Omega)}{\mathfrak c ~ \Gamma (2n_{\widetilde{\mathcal M}, s}) ~\mu(\mathcal B)} \right)^{2n_{\widetilde{\mathcal M}, s}}~\mu(\mathcal B)^{-1/p}~\|f\|_{L^p(\mathcal B)},
		\eeas
where $\mathfrak c= 2^d \mathcal C/\mathcal C^\prime>1$ as defined before.  We choose $W\subset \mathcal B$ as the set of points inside the ball where $|f|$ is small relative to its $L^p$-norm on $\mathcal B$ and the measure of $\Omega$.  Precisely,  we define
		\bes
		W = \left\{x \in \mathcal B:  |f(x)|< \left(\frac{\mu(\mathcal B \cap \Omega)}{2 \mathfrak c \Gamma(2n_{\widetilde{\mathcal M}, s}) ~\mu(\mathcal B)}\right)^{2 n_{\widetilde{\mathcal M}, s}} \mu(\mathcal B)^{-1/p}~ \|f\|_{L^p(\mathcal B)}\right \}.
		\ees
		In the following argument,  we assume without loss of generality that $\mu(W)>0$.  Proceeding as before,  we get that
		\be \label{est-W}
		\sup_{W} |f | \geq  \left(\frac{\mu(W)}{\mathfrak c ~ \Gamma(2n_{\widetilde{\mathcal M}, s}) ~\mu(\mathcal B)}\right)^{2 n_{\widetilde{\mathcal M}, s}}~\mu(\mathcal B)^{-1/p}~ \|f \|_{L^p(\mathcal B)} .
		\ee
On the other hand,  the definition of $W$ yields
		\be \label{est-W1}
		\sup_W{|f|}\leq \left(\frac{\mu(\mathcal B \cap \Omega)}{2 \mathfrak c~ \Gamma(2n_{\widetilde{\mathcal M}, s}) ~\mu(\mathcal B)}\right)^{2 n_{\widetilde{\mathcal M}, s}} \mu(\mathcal B)^{-1/p}~ \|f\|_{L^p(\mathcal B)}.
		\ee
		Since $\|f \|_{L^p(\mathcal B(x, r))}>0$,  it follows from the estimates (\ref{est-W}) and (\ref{est-W1}) that $\mu(\mathcal B \cap \Omega) \geq 2 \mu(W)$ and hence,  $\mu(\mathcal B \cap \Omega \backslash W) \geq \frac{\mu(\mathcal B \cap \Omega)}{2}$.  Using the lower bound for $|f|$ on $\mathcal B \cap \Omega\backslash W$ we obtain
		\bea \label{est-final}
		&& \|f\|_{L^p(\mathcal B \cap \Omega)} \geq  \|f\|_{L^p(\mathcal B \cap \Omega\backslash W)} \nonumber\\
		&& \geq \left(\frac{ \mu(\mathcal B \cap \Omega)}{2 \mathfrak c~ \Gamma(2n_{\widetilde{\mathcal M}, s})~ \mu(\mathcal B)}\right)^{2 n_{\widetilde{\mathcal M}, s}} ~ \mu(\mathcal B)^{-1/p}~\|f \|_{L^p(\mathcal B)}~\mu(\mathcal B \cap \Omega \backslash W)^{1/p} \nonumber\\
		&& \geq \left(\frac{\mu(\mathcal B \cap \Omega)}{2 \mathfrak c~ \Gamma(2n_{\widetilde{\mathcal M}, s}) ~\mu(\mathcal B)}\right)^{2 n_{\widetilde{\mathcal M}, s}+\frac{1}{p}} \|f \|_{L^p(\mathcal B)} \nonumber\\
		&& \geq \left(\frac{  \gamma}{2\mathfrak c~\Gamma(2n_{\widetilde{\mathcal M}, s})}\right)^{2 n_{\widetilde{\mathcal M}, s}+\frac{1}{p}} \|f \|_{L^p(\mathcal B)}.
		\eea
We notice that the above estimate holds as well when $\|f\|_{L^p(\mathcal B_j)} = 0$.  Using the estimate (\ref{est-good}) it follows from (\ref{est-final}) that
		\beas
		\int_{G} |f(g)|^p~d\mu(g) &\leq& 2 \sum_{\textit{ good balls }} \int_{\mathcal B_j} |f(g)|^p~d\mu(g)\\
		&\leq& 2 \sum_{\textit{ good balls }} \left(\frac{2\mathfrak c \Gamma(2n_{\widetilde{\mathcal M}, s})}{ \gamma}\right)^{2 p n_{\widetilde{\mathcal M}, s}+1} \|f \|_{L^p(\mathcal B_j \cap \Omega)}^p\\
		&\leq& 2 N  \left(\frac{2 \mathfrak c\Gamma(2n_{\widetilde{\mathcal M}, s})}{ \gamma}\right)^{2 p n_{\widetilde{\mathcal M}, s}+1} \|f \|_{L^p(\mathcal B_{good} \cap \Omega)}^p\\
		&\leq&2N \left(\frac{2 \mathfrak c \Gamma(2n_{\widetilde{\mathcal M}, s})}{ \gamma}\right)^{2 p n_{\widetilde{\mathcal M}, s}+1} \|f \|_{L^p(\Omega)}^p,
		\eeas
 This completes the proof for $p\in [1, \infty)$.  For the case $p=\infty$, the proof follows similarly.  However,  in this case a ball $\mathcal B_j$ is good if
		 for all $n\in \N$
		\be \label{defn-good-infty}
		\| \mathcal L^{n} f\|_{L^\infty(\mathcal B_j)} \leq B^{n} M_{2n} \| f\|_{L^\infty(\mathcal B_j)}.
		\ee
		Let $g^\prime\in G$ such that $f(g^\prime)=\|f\|_{L^\infty(G)}$ and assume $g^\prime\in \mathcal B_j$ for some $j$.  As in the previous argument,  using the definition of bad balls,  we show that $\mathcal B_j$ must be a good ball. Therefore,  $\|f\|_{L^\infty(\mathcal B_{good})}= \|f\|_{L^\infty(G)}$.  We now proceed as before, considering each good ball $\mathcal B_j$.  Precisely,  we choose $g_0\in \mathcal B_j$ such that $|f(g_0)| \geq \frac{1}{2}\|f\|_{L^\infty(\mathcal B_j)}$.  Defining  $F_Y$ as in the earlier step and following the same reasoning,  we get $C>0$ such that
		\bes
		\|f\|_{L^\infty(\mathcal B_j \cap \Omega)}\geq \left(\frac{ C \gamma}{\Gamma(2n_{\widetilde{\mathcal M}, s})}\right)^{2 n_{\widetilde{\mathcal M}, s}} \|f \|_{L^\infty(\mathcal B_j)}.
		\ees 
		Summing over all good balls completes the proof.
	\end{proof}
	
	\begin{proof}[Proof of Corollary \ref{thm-main-2}]
		Let $\gamma \in (0, 1]$ and $C_W>0$. We show that there exists $C>0$ depending on $C_W, \gamma,  G$ such that evenever a function $f \in L^2(G)$ satisfies
		\be \label{est-proof}
		\sum_{[\xi] \in \widehat G} d_\xi ~W(|\xi|)^2 ~ |\widehat{f}(\xi)|^2  \leq C_W^2 \|f\|_{L^2(G)}^2,
		\ee
		and $\Omega$ is a $\gamma$-relatively dense set in $G$, we get $\|f\|_{L^2(G)} \leq C \|f\|_{L^2(\Omega)}$.
		In view of Theorem \ref{thm-main}, it is enough to show that under the hypothesis (i), (ii) and (iii) on $W$ and the estimate (\ref{est-proof}) implies that $f \in C_{\mathcal M}(G)$, for some quasi-analytic sequence $\mathcal M$.  Without loss of generality,  we assume $\|f\|_{L^2(G)}=1$ and $W \equiv 1$ on $[0, 1]$.  Let us define 
		\bes
		M_n=\sup_{\lambda \geq 0} \frac{\lambda^n}{W(\lambda)}=\sup_{\lambda \geq 1} \frac{\lambda^n}{W(\lambda)}.
		\ees
		We observe that $M_0=1$ and $M_n$ is an increasing log-convex sequence.  It is shown  in \cite[Prop. 2.2]{JM} that (\ref{decay-weight}) implies $\{M_n\}_{n=0}^\infty$ is a quasi-analytic sequence.  Using Plancherel formula (\ref{Plancherel}) and hypothesis (\ref{est-proof}),  we get
		\beas 
		\|\mathcal L^{n} f\|_{L^2(G)}&=&  \left(\sum_{[\xi] \in \widehat G} d_\xi ~|\xi|^{4n}~ \|\widehat f(\xi)\|_{HS}^2 \right)^{1/2}  \\
		&\leq& \sup_{[\xi] \widehat G} \frac{|\xi|^{2n}}{W(|\xi|)}  \left(\sum_{[\xi] \in \widehat G} d_\xi~W(|\xi |)^2~ \|\widehat f(\xi)\|_{HS}^2 \right)^{1/2} \\
		&\leq& \sup_{\lambda\geq 0} \frac{\lambda^{2n}}{W(\lambda)}~ C_W = C_W~M_{2n}.
		\eeas
		Clearly, $\mathcal M^\prime= \{C_W M_n\}_{n=0}^\infty$ is a quasi-analytic sequence and $f\in C_{\mathcal M^\prime}(G)$.  Theorem {\ref{thm-main-2}} follows now from Theorem \ref{thm-main}.
		
	\end{proof}
	
	\begin{proof}[Proof of Corollary \ref{thm-spectral}]
		Let $f\in L^2(G)$.  It follows from the definition (\ref{defn-PLambda}) that
		\bes 
		\mathcal L^n P_\Lambda f(g)=  \sum_{\{[\xi]\in \widehat G: |\xi|\leq \sqrt{\Lambda}\}} d_\xi ~ |\xi|^{2n}~Tr\left(\xi(g) \widehat f(\xi)\right),
		\ees
		and hence,  by Plancherel formula (\ref{Plancherel}) 
		\beas
		\|\mathcal L^{n} (P_\Lambda f)\|_{L^2(G)} &=& \left( \sum_{\{[\xi] \in \widehat G: |\xi|\leq \sqrt{\Lambda}\}} d_\xi ~|\xi|^{4n}~ \|\widehat f(\xi)\|_{HS}^2 \right)^{1/2} \leq \Lambda^{n}~\|P_\Lambda f\|_{L^2(G)}.
		\eeas
		It is enough to prove the required estimate for all $\|P_\lambda f\|_{L^2(G)}=1$.  Let $\mathcal M=\{\Lambda^{n/2}\}_{n=0}^\infty$.  Clearly,  the sequence $\mathcal M$ is quasi-analytic.  Also,  $P_\Lambda f\in C_{\mathcal M}(G)$. Therefore,  by Theorem \ref{thm-main}
		it follows that 
		\bes
		\|f\|_{L^2(G)}\leq  \left( \frac{C \Gamma(2n_{\widetilde{\mathcal M}, s})}{\gamma} \right)^{2 n_{\widetilde{\mathcal M}, s}+\frac{1}{2}}   \|f\|_{L^2(\Omega)},
		\ees
		for all $\gamma$-relatively dense set $\Omega$. It now remains to verify the constants in this particular class.  Clearly,  for $\mathcal M=\{\Lambda^{n/2}\}_{n=0}^\infty$, we obtain $\widetilde{\mathcal M}=\{c^n\Lambda^{n/2}\}$,  where $c^n=2^nr^n  \mathfrak C^n ~ B^n$. Also, $n_{\widetilde{\mathcal M}, s}\leq e \sqrt{\Lambda}$ and $\Gamma(n)=4e^4$ for all $n$.  Therefore the constant appearing in the right-hand side of the above inequality reduces to $(C/\gamma)^{2e\sqrt{\Lambda}+1/2}$.
	\end{proof}
	
	\section{Uniqueness results on quotient spaces}
	
	In this section,  we prove the uniqueness result (Theorem \ref{thm-X}) in the context of homogeneous spaces $G/H$,  where $G$ is a compact, connected Lie group and $H$ is any closed subgroup of $G$.  This gives, in particular,  a generalized Logvinenko-Sereda type result on the $d$-dimensional sphere $S^d=SO(d+1)/SO(d)$.   We will denote the base point in $G/H$ by $o =eH$. Conceptually,  the proof of the result is similar to that of Theorem \ref{thm-main}, with most of the ideas being contained in there.

	We recall that $\mathfrak h$ denotes the Lie algebra of $H$,  and we take $\mathfrak q$ to be the orthogonal complement of $\mathfrak h$ in $\mathfrak g$ with respect to the $Ad$-invariant inner product $\langle,  \rangle$ on $\mathfrak g$.  The Lie algebra $\mathfrak q$ can be identified with the tangent space $T_o(G/H)$ at the origin $o=eH$.   Let $l=\dim \mathfrak q$.  Since $\mathcal L$ is both left- and right-invariant,  it naturally descends to a left-invariant differential operator $\widetilde{\mathcal L}$ on $G/H$.   Let $\pi: G \ra G/H$ be the canonical projection map.  For a function $f$ defined on $G/H$,  we denote its pullback to $G$ by $f^\prime =f \circ \pi$.  Clearly,  $f^\prime$ is a right $H$-invariant function on $G$.   It can be shown that for  $f\in C^\infty(G/H)$,  $\widetilde{\mathcal L} f= \mathcal L f^\prime$. 
	As we have mentioned earlier,  every element $D\in \mathcal U(\mathfrak q)$ corresponds to a differential operator $d_r(D)$ on $G/H$ defined as in (\ref{defn-dr}).  In fact,   these are right-invariant differential operators for functions $f^\prime$ on $G$; hence,  they descend to a differential operator for functions on $G/H$.  
	
	Let $Exp: q \ra G/H$ be the exponential map given by $Exp(X) = \exp(X) H$.  By identification of $q$ with the tangent space $T_o(G/H)$,  we
	thus identify $Exp$ with the exponential map at $o$ associated with the Riemannian connection.  Since $G/H$ is a compact Riemannian manifold,  the injectivity radius $\widetilde r_{inj}$ of $G/H$ is positive.  Let $B(0, r)\subset q$ be the ball of radius $r$.  Then,  for all $0< r< \widetilde r_{inj}$,  the Riemannian exponential map from $B(0, r) \ra \widetilde{\mathcal B}(gH,  r)$ is surjective for all $gH\in G/H$.   Precisely,  for any $gH\in G/H$,  the map $X\ra g\exp(X)H$ is a surjective map between these balls.  As in the case of full group $G$,  there exists $\mathfrak r\in (0,  \widetilde r_{inj})$ such that for all $0<r< \mathfrak r$,  the volume form $\widetilde{\mu}$ in geodesic polar coordinates around $gH$ has the following property: there exists $\eta_1,\eta_2>0$ such that
	\be \label{vol-est-GH}
	\eta_2 r^{l-1}~dr~d\sigma(X)\leq d\widetilde{\mu}(g Exp(rX)) \leq \eta_1 r^{l-1}~dr~d\sigma(X), 
	\ee
	where $d\sigma(X)$ denotes the canonical normalized measure of the unit sphere $\mathcal S^{l-1}$ in $\mathfrak q$.  We now conclude the proof of Theorem \ref{thm-X} by following the argument of Theorem \ref{thm-main} verbatim.
	
	\begin{proof}[Proof of Theorem \ref{thm-X}]
		Without loss of generality we assume $\|f\|_{L^p(G/H)}=1$.  Let $r \in (0,  \mathfrak r/2)$ be such that $|\widetilde{\mathcal B}(gH, r) \cap \widetilde \Omega| \geq \gamma |\widetilde {\mathcal B}(gH, r)|$.  We consider $p\in[1, \infty)$ and without loss of generality we assume $r\leq 1$.   We recall $f^\prime =f\circ \pi \in C^\infty(G)$. We cover $G/H$ by finite number of balls with radius $r$ and divide them in good and bad balls.  Precisely,  a ball $\widetilde{\mathcal B}$ is good,  if for all $n\in \N$ and $X \in \mathcal S^{l-1} \subset \mathfrak q$
		\be \label{defn-good-GH}
		\int_{\pi^{-1}(\widetilde{\mathcal B})} \left|\widetilde{X}^{n} f^\prime(g)\right|^p ~d\mu(g) \leq B^{n} \bar M_{n}^p \int_{\pi^{-1}(\widetilde{\mathcal B})} |f^\prime(g)|^p~d\mu(g),
		\ee
		It follows from the definition that if $\widetilde{\mathcal B}$ is bad,  then 
		\be \label{est-bad}
		\int_{\pi^{-1}(\widetilde{\mathcal B})}  |f^\prime(g)|^p~d\mu(g) < \sum_{n\in \N}\frac{1}{B^{n}~\bar M_{n}^p}  \int_{\pi^{-1}(\widetilde{\mathcal B})}  |\widetilde X^{n} f^\prime(g)|^p~d\mu(g), \textit{ for some } X\in S^{l-1}.
		\ee
		Since $f\in C_{\mathcal M, p}(G/H)$, and $d\widetilde \mu$ is a normalised measure on $G$, we get that $\|\widetilde{\mathcal L}^n f\|_{L^1(G/H)}\leq M_{2n}$.    Using the fact that $\mathcal L f^\prime=\widetilde{\mathcal L}f$ , it follows  that  $\|\mathcal L^n f^\prime \|_{L^1(G)}\leq M_{2n}$.  Consequently,  using Lemma \ref{lem-Sobolev}, we obtain that for all $X\in \mathcal S^{l-1}$
		\be \label{est-bad-union}
		\int_{\pi^{-1}(\widetilde{\mathcal B})}  |\widetilde X^{n} f^\prime(g)|^p~d\mu(g)\leq \widetilde{\mu}(\widetilde{\mathcal B})~ \|\widetilde X^{n} f^\prime\|_{L^\infty(\pi^{-1}(\widetilde{\mathcal B}))}^p\leq  \mathfrak C^{p(n+1)} ~ \bar M_n^p,
		\ee
		where the sequence $\{\bar M_n\}$ is defined in \eqref{defn-barM}. Following the proof of Theorem \ref{thm-main},  we need to work with only a fixed good ball.  Specifically,  it is enough to show that there exists $C>0$ such that for any good ball $\widetilde{\mathcal B}=\widetilde {\mathcal B}(gH, r)$
		\be \label{claim-X}
		\|f\|_{L^p(\widetilde{\mathcal B} \cap \widetilde\Omega)}\geq \left(\frac{ C \gamma}{\Gamma(2n_{\widetilde{\mathcal M}, s})}\right)^{2 n_{\widetilde{\mathcal M}, s}+\frac{1}{p}} \|f \|_{L^p(\widetilde{\mathcal B})}.
		\ee
To prove this,  let $g_0H \in\widetilde{\mathcal B}$ be a point such that $|f(g_0H)| \geq \|f\|_{L^p(\widetilde{\mathcal B})} ~|\widetilde{\mathcal B}|^{-1/p}$.  For $X\in \mathcal S^{l-1}\subset q$,  we take the geodesic curve segment $c_X(t)= g_0 Exp (tX) \subset \widetilde{\mathcal B}$ starting at $g_0H$.   Applying estimate (\ref{vol-est-GH}),  we get  $Y \in \mathcal S^{l-1}$ such that
		\be \label{est-B-0}
		|\widetilde{\mathcal B}\cap \widetilde{\Omega} | \leq \eta_1 \int_{0}^{2r} \chi_{\widetilde{\mathcal B}\cap \widetilde{\Omega}}(c_{Y}(t))~t^{l-1}~ dt.
		\ee  
Let $\kappa=\sup\{t\in[0,2r]: c_Y(t)\in \widetilde{\mathcal B}\}$ and $I=\{c_Y(t): 0\leq t < \kappa\} \subset \widetilde{\mathcal B}$ . 	Then,  $\gamma$-relatively denseness of $\widetilde\Omega$ and $(\ref{est-B-0})$ imply that there exists $\eta >1$ such that,  the arc length measure of the set $I\cap \widetilde \Omega$ satisfies $|I \cap \widetilde\Omega|/|I|\geq \gamma/\eta$.  Let $A := \{t \in [0,1] : g_0 Exp(t \kappa Y) \in I\cap \widetilde\Omega \}$. Then $|A|= \frac{|I\cap \widetilde\Omega|}{|I|}$.  Let us define 
		\bes
		F_Y(t)=\frac{1}{4\|f^\prime\|_{L^p(\pi^{-}{(\widetilde{\mathcal B}}))}\mathfrak{c}_{emb} B^{1/p}~ \sqrt{M_{2d} M_{2d+2}}}~f^\prime (g_0 \exp(t \kappa Y)),  \:\: t\in [0, 1].
		\ees 
		It follows that for $n\in \N_0$
		\be \label{est-derivatives-GH}
		\frac{d^n}{dt^n} F_Y(t)=\frac{\kappa^n}{4 \|f^\prime\|_{L^p(\pi^{-}{(\widetilde{\mathcal B}}))} \mathfrak{c}_{emb} B^{1/p}~ \sqrt{M_{2d} M_{2d+2}}} ~(\widetilde Y^n f^\prime)(g_0 \exp (t \kappa Y)),  \:\: \textit{  for } \:\: t\in [0, 1]. 
		\ee
We recall that $\kappa \leq2r\leq 2$. Using Sobolev estimate \eqref{Sobolev-emb}, it follows from above relation that
		\beas
		\|\frac{d^n}{dt^n} F_Y\|_{L^\infty(0, 1)} &\leq& \mathfrak c_{emb} \left(\|\frac{d^{n}}{dt^{n}} F_Y\|_{L^p(0, 1)} + \|\frac{d^{n+1}}{dt^{n+1}} F_Y\|_{L^p(0, 1)}\right)\\
		&\leq& \frac{2^{n-1}}{\|f^\prime\|_{L^p(\pi^{-}{(\widetilde{\mathcal B}}))} B^{1/p}~ \sqrt{M_{2d} M_{2d+2}}} ~\left(\|\widetilde Y^n f^\prime \|_{L^p(\pi^{-1}(\widetilde{\mathcal B}))} + \|\widetilde Y^{n+1}f^\prime \|_{L^p(\pi^{-1}(\widetilde{\mathcal B}))}\right).
		\eeas
		Since $M_{n}$ is log-convex non-decreasing and the fact that $\widetilde{\mathcal B}$ is a good ball that, it follows that for all $n\in \N_0$ 
		\beas
		\|\frac{d^n}{dt^n} F_Y\|_{L^\infty(0, 1)} &\leq& \frac{2^n~ B^{(n+1)/p}}{B^{1/p}~ \sqrt{M_{2d} M_{2d+2}}}\sqrt{M_{2d+n} M_{2d+n+2}}\\
		&=& \frac{2^n~B^{n/p}}{\sqrt{M_{2d} M_{2d+2}}}\sqrt{M_{2d+n} M_{2d+n+2}}.
		\eeas
		We consider the sequence $\widetilde{\mathcal M}=\{\widetilde{M}_n\}_{n\in \N_0}$ as defined in (\ref{defn-tilde-M}) and conclude from above that $F_Y \in C_{\widetilde{\mathcal M}}[0, 1]$.  The rest of the argument to show (\ref{claim-X}) now follows as before.
	\end{proof}
	
	\section{Unique continuation property on $G$ and $G/H$}
	We begin the section with the proof of Theorem \ref{thm-cher-G}.
	\begin{proof}[Proof of Theorem \ref{thm-cher-G}]
		We note that it is enough to prove the theorem for $p=1$.  Using the left-invariance of $\widetilde D$ for $D \in \mathcal U(\mathfrak g)$ and that of the Haar measure $\mu$, we may assume without loss of generality that $f \in \widetilde C_{\mathcal M,1}(G)$ vanishes along with $\widetilde D f$ for all $D \in \mathcal U(\mathfrak g)$ at the identity $e$.
		
		For each $X\in \mathcal S^{d-1}\subset \mathfrak g$,  we define the function
		\bes
		F_X(t)= f(\exp tX), \:\: t\in \R.
		\ees  
		In view of Lemma \ref{lem-Sobolev} and the log-convexity of $\{M_n\}$,  it follows that 
		\bes
		\|F_X^{(n)}\|_{L^\infty(\R)}\leq  B_f \beta_f^{d+n/2}\mathfrak C^{n+1} \sqrt{M_{2d+n-1} M_{2d +n+1}}, \:\: \textit{ for all } \:\: n\in \N_0.
		\ees
		Clearly,  the sequence $\{\overline{M_n}\}$,  where $\overline{M_n}=\sqrt{M_{2d+n-1} M_{2d +n+1}}$,  is quasi-analytic.  Moreover,  $F_X^{(n)}(0)=0$ for all $n$.  Hence,  by the Denjoy-Carleman theorem (Theorem \ref{thm-DC}),  we get $f(\exp(tX))=0$, for all $t \in \R$.  Since this is true for all $X \in S^{d-1} $, we conclude that $f(\exp Y)=0$ for all $Y\in \mathfrak g$.  The surjectivity of the exponential map concludes that $f$ vanishes identically on $G$. 
	\end{proof}
	
	To prove the uniqueness theorem on the quotient space,  we need to deal with right-invariant differential operators.  In this direction,  we need the following analogue of Lemma \ref{lem-Sobolev} for functions on $G/H$. 
	\begin{lem} \label{lem-Sobolev-GH}
		There exists $\mathfrak C>1$ such that for all $f\in C^\infty(G/H)$,  $X\in \mathcal S^{l-1}\subset \mathfrak q$ and for all $n\in \N_0$,  we have
		\beas
		\| d_r(X^n) f \|_{L^\infty(G/H)} &\leq &  \mathfrak C^{n+1} 
		\begin{cases} 
			\|\widetilde {\mathcal L}^{d+n/2} f\|_{L^1(G/H)},  & \text{ if $n$ is even }; \\
			\|\widetilde{\mathcal L}^{d+(n-1)/2} f\|_{L^1(G/H)}^{1/2}~\|\widetilde{\mathcal L}^{d+(n+1)/2} f\|_{L^1(G/H)}^{1/2},  & \text{ if $n$ is odd }.
		\end{cases}
		\eeas
	\end{lem}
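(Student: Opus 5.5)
The plan is to reduce Lemma \ref{lem-Sobolev-GH} to the argument of Lemma \ref{lem-Sobolev}, applied on $G$ to the pullback $f^\prime=f\circ\pi$. Everything should then go through once we swap the relation (\ref{rel-D}) for its right-invariant analogue (\ref{rel-D-r}). Fix $f\in C^\infty(G/H)$ and $X\in\mathcal S^{l-1}\subset\mathfrak q\subset\mathfrak g$. Since $\mathfrak q$ carries the restricted inner product, $X$ is also a unit vector in $\mathfrak g$. By definition, $d_r(X^n)f$ on $G/H$ is the function whose pullback is $d_r(X^n)f^\prime$. Hence
\[
\|d_r(X^n)f\|_{L^\infty(G/H)}=\|d_r(X^n)f^\prime\|_{L^\infty(G)}.
\]
We also have $\mathcal L^k f^\prime=(\widetilde{\mathcal L}^k f)\circ\pi$, and $\widetilde\mu$ is the pushforward of $\mu$. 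Therefore
\[
\|\mathcal L^k f^\prime\|_{L^1(G)}=\|\widetilde{\mathcal L}^k f\|_{L^1(G/H)}.
\]
So it suffices to prove the bound of Lemma \ref{lem-Sobolev} with $\widetilde X^n$ replaced by $d_r(X^n)$, for all $f^\prime\in C^\infty(G)$ and all unit $X\in\mathfrak g$.

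For that we expand $f^\prime$ by the inversion formula (\ref{FT-inversion}), which converges uniformly together with its derivatives since $f^\prime$ is smooth. Applying (\ref{rel-D-r}) termwise gives
\[
d_r(X^n)f^\prime(g)=\sum_{[\xi]\in\widehat G} d_\xi\, Tr\left(d\xi(X^n)\,\xi(g)\,\widehat{f^\prime}(\xi)\right).
\]
Using the trace inequality together with $\|d\xi(X^n)\xi(g)\|_{HS}\le\|d\xi(X)\|_{op}^n\|\xi(g)\|_{HS}$, we again get the bound (\ref{est-X-1}): $\|d\xi(X)\|_{op}\le\sqrt d\,|\xi|$. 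This holds because $X=\sum c_iX_i$ with $\sum c_i^2=1$ in an orthonormal basis of $\mathfrak g$, and $\|d\xi(X_i)\|_{op}\le|\xi|$ by (\ref{rel-L}). We then arrive at exactly (\ref{est-X-2nd}):
\[
|d_r(X^n)f^\prime(g)|\le d^{n/2}\sum_{[\xi]}d_\xi^{3/2}|\xi|^n\|\widehat{f^\prime}(\xi)\|_{HS}.
\]
From here the argument of Lemma \ref{lem-Sobolev} applies verbatim. For the estimate (\ref{est-HS}) we write $|\xi|^{2k}\|\widehat{f^\prime}(\xi)\|_{HS}\le\sqrt{d_\xi}\|\mathcal L^kf^\prime\|_{L^1(G)}$, using centrality of $\mathcal L$. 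We compare $|\xi|^{-2d}\le C_1^{2d}\langle\xi\rangle^{-2d}$ on $\widehat G^\ast$. Summability of $\sum d_\xi^2\langle\xi\rangle^{-2d}$ comes from Lemma \ref{lem-d-xi}, since $2d>d$. For odd $n$ we apply Cauchy--Schwarz, splitting $|\xi|^n=|\xi|^{(n-1)/2}|\xi|^{(n+1)/2}$. The trivial representation contributes nothing when $n\ge1$. For $n=0$ it contributes $|\widehat{f^\prime}(1)|\le\|f^\prime\|_{L^1}$, which is already absorbed in the $n=0$ case of Lemma \ref{lem-Sobolev} through the same convention.

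The only point needing care is the identification step. We must check that $d_r(D)$ for $D\in\mathcal U(\mathfrak q)$ really is well defined on $G/H$, i.e.\ that it preserves right $H$-invariance. This holds because left translations $f^\prime(\exp(tX)g)$ commute with right translations by $H$. We must also check that $\widetilde{\mathcal L}f$ pulls back to $\mathcal L f^\prime$, which the paper has already recorded. With these in hand, the constant $\mathfrak C$ can be taken to be the same one as in Lemma \ref{lem-Sobolev}, namely $\max(\sqrt d,\,C_1^{2d}C_d)$ up to harmless adjustment. In particular, $\mathfrak C$ depends only on $G$ and not on $H$.
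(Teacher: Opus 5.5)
Your proposal follows the paper's proof in more detail. You pull $f$ back to $f^\prime=f\circ\pi$, rerun the Fourier-series argument of Lemma \ref{lem-Sobolev} with (\ref{rel-D-r}) in place of (\ref{rel-D}), and transfer the norms back to $G/H$ using that $\widetilde\mu$ is the pushforward of $\mu$ and that $\mathcal L f^\prime=(\widetilde{\mathcal L}f)\circ\pi$. For $n\geq 1$ every step you write is correct. This includes the check that $d_r(D)$ preserves right $H$-invariance, which the paper only asserts.

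The one claim that fails is your handling of $n=0$. The trivial representation contributes $|\widehat{f^\prime}(1)|=\bigl|\int_{G/H}f\,d\widetilde\mu\bigr|$, and this cannot be absorbed into $\mathfrak C\|\widetilde{\mathcal L}^{d}f\|_{L^1(G/H)}$. For $f\equiv 1$ the left-hand side equals $1$ while the right-hand side is $0$, for every choice of $\mathfrak C$. So the $n=0$ case of the statement, and of Lemma \ref{lem-Sobolev} itself, is false. No convention in the paper covers it: the proof of Lemma \ref{lem-Sobolev} simply sums over $\widehat G^\ast$, which is legitimate only when $|\xi|^n$ kills the trivial representation, i.e.\ $n\geq 1$.

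The fix is to state the lemma for $n\in\N$ only, or to add $\|f\|_{L^1(G/H)}$ to the right-hand side when $n=0$ (equivalently, work with $I-\widetilde{\mathcal L}$). This does not damage the applications. The bad-ball estimates only use $n\geq 1$. In the unique-continuation proofs, the $n=0$ bound $\|\mathcal F_X\|_{L^\infty}\le\|f\|_{L^\infty}$ can be absorbed into the constant $B_f$.
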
 
	\begin{proof}
		Let $f\in C^\infty(G/H)$ and $X\in \mathcal S^{l-1}\subset \mathfrak q$.  We now apply Lemma \ref{lem-Sobolev} to the function $f^\prime$.  However, in this case, we must replace the left-invariant differential operator $\widetilde X$ by the right-invariant one $d_r(X^n)$.  To account for this change,  we use the relation (\ref{rel-D-r}) instead of (\ref{rel-D}).  Repeating the same argument then yields the required estimate for the right $H$-invariant function $f^\prime$.  Moreover,  since the operators $d_r(X^n)$ are right-invariant,  we have  $d_r(X^n) f^\prime= d_r(X^n) f$.  This completes the proof. 
	\end{proof}

	\begin{proof}[Proof of Theorem \ref{thm-cher-GH}]
		The proof is similar to that of Theorem \ref{thm-cher-G}.  As before, using the compactness of $G/H$,  it is enough to prove the theorem for $p=1$.   By the hypothesis $d_r(D)f(o)=0$,  for all $D\in \mathcal U(\mathfrak q)$.  For each $X\in \mathcal S^{l-1} \subset \mathfrak q$,  we define the function
		\bes
		\mathcal F_X(t)= f\left(\exp (tX)H\right),  \:\: t\in \R.
		\ees  
		Then 
		\bes
		\frac{d^n}{dt^n} \mathcal F_X(t)= d_r(X^n) f(\exp(tX)H)
		\ees
		In view of Lemma \ref{lem-Sobolev-GH} and the log-convexity of $\{M_n\}$,  it follows that 
		\bes
		\|\mathcal F_X^{(n)}\|_{L^\infty(\R)}\leq B_f \beta_f^{d+n/2} \mathfrak C^{n+1} \sqrt{M_{2d+n-1} M_{2d +n+1}}, \:\: \textit{ for all } \:\: n\in \N_0.
		\ees 
		This shows that $\mathcal F_X\in C_{\overline{\mathcal M}}(\R)$.  Clearly,   $\mathcal F_X^{(n)}(0)=0$,  for all $n\in \N$. Hence,  by the Denjoy-Carleman theorem,  we conclude that $f(\exp(tX)H)=0$, for all $t\in \R$.  Therefore,  $f(\exp(X)H)=0$ for all $X\in \mathfrak q$.  Since the map $Exp: \mathfrak q \ra G/H$ is surjective,  it follows that $f(gH)=0$ for all $g\in G$. This completes the proof. 
		
\end{proof}
	
	
	

\end{document}